\documentclass[10pt, reqno]{amsart}
\usepackage{amsfonts, amssymb,amscd,amsthm,amsxtra}
\usepackage{latexsym}
\usepackage{amsmath}
\usepackage{esint}
\usepackage{graphicx}  \usepackage{epstopdf}

\usepackage{color}
\usepackage[usenames,dvipsnames,svgnames,table]{xcolor}
\bibliography{refs}

\usepackage{hyperref}
\hypersetup{colorlinks = false}

\usepackage[mathscr]{euscript}
\renewcommand{\mathcal}[1]{{\mathscr#1}}


\vfuzz3pt 
\hfuzz2pt 

\newtheorem{theorem}{Theorem}[section]

\newtheorem{lemma}[theorem]{Lemma}
\newtheorem{prop}[theorem]{Proposition}

\theoremstyle{definition}
\newtheorem{definition}[theorem]{Definition}
\theoremstyle{remark}
\newtheorem{remark}[theorem]{Remark}
\numberwithin{equation}{section}

\newcommand{\R}{{\mathbb R}}

\renewcommand{\leq}{\leqslant}

\renewcommand{\geq}{\geqslant}

\newcommand{\dist}{\mbox{dist}}
\renewcommand{\epsilon}{\varepsilon }

\newlength{\defbaselineskip}
\setlength{\defbaselineskip}{\baselineskip}
\newcommand{\setlinespacing}[1]
           {\setlength{\baselineskip}{#1 \defbaselineskip}}

\author[B. Barrios]{Bego\~na Barrios}
\address[Bego\~na Barrios]{Departamento de An\'{a}lisis Matem\'{a}tico, Universidad de La Laguna
\hfill \break \indent C/Astrof\'{\i}sico Francisco S\'{a}nchez s/n, 38271 -- La Laguna, Spain}
\email{bbarrios@ull.es}

\author[M. Medina]{Maria Medina}
\address[Mar\'ia Medina]{Departamento de Matem\'aticas,
Universidad Aut\'onoma de Madrid,
Ciudad Universitaria de Cantoblanco,
28049 Madrid, Spain}
\email{maria.medina@uam.es}

\begin{document}


\keywords{nonlocal operators, fractional $p$-Laplacian, viscosity solutions, weak solutions, non-homogeneous problems.}

\thanks{B. B. was partially supported by the MEC project PGC2018-096422-B-I00 (Spain) and the Ram\'on y Cajal fellowship RYC2018-026098-I (Spain). M. M. was supported by the European Union's Horizon $2020$ research and innovation programme under the Marie Sklodowska-Curie grant agreement N $754446$ and UGR Research and Knowledge Transfer Fund - Athenea3i. M. M. wants to acknowledge Universidad de la Laguna's hospitality, where this work was initiated during a visit in July 2019.}

\title[Equivalence of weak and viscosity solutions]{Equivalence of weak and viscosity solutions in fractional non-homogeneous problems}

\begin{abstract}
We establish the equivalence between the notions of weak and viscosity solutions for non-homogeneous equations whose main operator is the fractional $p$-Laplacian and the lower order term depends on $x$, $u$ and $D_s^p u$, being the last one a type of fractional derivative.

\end{abstract}

\maketitle


\section{Introduction} 
In this work we study the equivalence between the notions of weak and viscosity solutions to the problem 
\begin{equation}\label{problem}
(-\Delta)^s_p u =f(x,u, D_s^pu),
\end{equation}
posed in a bounded domain $\Omega \subset \R^n$, with $s\in (0,1)$ and $1<p<+\infty$. By $(-\Delta)^s_p$ we mean the fractional $p$-Laplacian, defined for a regular enough function $u$ as
\begin{equation}\label{operador}
(-\Delta)^s_p u(x):= a_{n,s,p}\,\mbox{p.v.}\, \int_{\R^n}\frac{|u(x)-u(y)|^{p-2}(u(x)-u(y))}{|x-y|^{n+sp}}dy,
\end{equation}
where $n>ps$ and $a_{n,s,p}$ is a normalization constant that will be omitted from now on. The integral in \eqref{operador} 
has to be understood in the principal value sense, that is, as the limit when $\epsilon\to 0$ of the 
same integral computed in {$\R^n\setminus B_\epsilon(x)$. This operator can be seen as a nonlocal counterpart of the $p$-Laplacian operator,
$$-\Delta_p u:=-\mbox{div} (|\nabla u|^{p-2}\nabla u),$$
and $D_s^p u$ stands for a sort of fractional gradient. Consider indeed the case $p=2$, for which the nonlocal, and linear, operator $(-\Delta)^{s}_{p}$ corresponds to the standard fractional Laplacian given by
$$(-\Delta)^s u(x):=\mbox{p.v.}\int_{\R^n}\frac{u(x)-u(y)}{|x-y|^{n+2s}}dy.$$
As stated in \cite{BPSV, CD}, the corresponding bilinear form
$$\mathcal{B}_s(u,v)(x):=\int_{\R^n}\frac{(u(x)-u(y))(v(x)-v(y))}{|x-y|^{n+2s}}dy,$$
can be seen as a fractional derivative of order $s$ playing the role of $\nabla u\cdot\nabla v$ in the local case. Furthermore, $\mathcal{B}_s(u,u)$ can be thought as the analogue of $|\nabla u|^2$, the natural term appearing in the energy of problems driven by the classical Laplacian. Extending this idea to the whole range $1<p<+\infty$ we define the form, linear only in the second variable, associated to $(-\Delta)_p^su$ as
$$\mathcal{B}^p_s(u,v)(x):=\int_{\R^n}\frac{|u(x)-u(y)|^{p-2}(u(x)-u(y))(v(x)-v(y))}{|x-y|^{n+sp}}dy,$$
and the {\it gradient}
$$D_s^pu(x):=\mathcal{B}_s^p(u,u)(x)={\int_{\R^n}\frac{|u(x)-u(y)|^{p}}{|x-y|^{n+sp}}dy},$$
that correspond to $|\nabla u|^{p-2}\nabla u\cdot \nabla v$ and $|\nabla u|^p$ respectively for the classical $p$-Laplacian. See \cite{AF, BDS, CD} for problems with nonlinearities involving this type of fractional derivative. We refer also the reader to the works \cite{BS, CS, SSS} for other definitions of fractional gradients.

\medskip

Thus, it is clear that \eqref{problem} has a local analogue of the form
\begin{equation}\label{probLocal}
-\Delta_p u(x)=f(x,u,\nabla u)\quad \mbox{ in }\Omega.
\end{equation}
Equivalence among different types of solutions for problem \eqref{probLocal} was firstly studied in \cite{JLM} for the case $f\equiv 0$ via the notion of $p$-harmonic solutions, and later on in \cite{JJ} with different methods (via regularization with infimal convolutions) that apply also to the case $f=f(x)$, a continuous function. Finally in \cite{MO} the problem with a general $f(x,u,\nabla u)$ is studied. By extending the methods of \cite{JJ}, if $f:\Omega\times\R\times \R^n\to\R$ satisfies certain regularity conditions and has a growth of size
\begin{equation}\label{localGrowth}
|f(x,t,\eta)|\leq \gamma(|t|)|\eta|^{p-1}+\phi(x),
\end{equation}
with $\gamma\geq 0$ being a continuous function and $\phi\in L^\infty_{loc}(\Omega)$, the authors prove that viscosity solutions of \eqref{probLocal} are also weak solutions (see \cite[Theorem 1.4]{MO}). The reverse implication, weak solutions being also viscosity, relies on the availability of comparison principles among weak solutions, what imposes certain restrictions on $f$, specially in the singular case $1<p<2$ (see \cite[Theorem 1.5]{MO}). Besides its own interest, this equivalence of the different notions of solutions is useful, for instance, in removability questions (see \cite{JL,Po}).

\medskip

In the nonlocal case, the relation among weak, viscosity and $p$-harmonic solutions has been studied for every $1<p<+\infty$ in the works \cite{KKL, KKP2} when $f\equiv 0$. As far as we know,  concerning the non-homogeneous problem the only result can be found in \cite{SV}, where the authors prove that weak solutions are viscosity when $f=f(x)$ and $p=2$, that is, for the particular case of the fractional Laplacian. The goal of this work is to extend the results in \cite{MO} to the nonlocal framework of \eqref{problem} for every $1<p<+\infty$, giving a much more general equivalence result for nonlocal non-homogeneous problems. The first main result of the paper is the following.}
\begin{theorem}\label{ViscToWeak}
Let $1<p<+\infty$. Assume that $f=f(x,t,\eta)$ is uniformly continuous in $\Omega\times\R\times\R^n$, non increasing in $t$, Lipschitz continuous in $\eta$, and satisfies the growth condition 
\begin{equation}\label{growth}
|f(x,t,\eta)|\leq \gamma (|t|)|\eta|^{\frac{p-1}{p}}+\phi(x),
\end{equation}
where $\gamma\geq 0$ is continuous and $\phi\in L^\infty_{loc}(\Omega)$. Thus, if $u\in L^\infty(\R^n)$ and lower semicontinuous in $\R^n$ is a viscosity supersolution of \eqref{problem} then it is a weak supersolution of the problem.
\end{theorem}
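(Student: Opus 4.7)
The plan is to adapt the inf-convolution strategy of \cite{JJ, MO} to the nonlocal fractional $p$-setting, in the spirit of \cite{KKL, KKP2} where the case $f\equiv 0$ was treated. For small $\varepsilon>0$ define the inf-convolution
\[
u_\varepsilon(x) := \inf_{y \in \R^n}\left\{ u(y) + \frac{|x-y|^q}{q\,\varepsilon^{q-1}} \right\},
\]
where $q = q(p,s)$ is chosen large enough that $u_\varepsilon$ is semiconcave and $u_\varepsilon$ lies in a reasonable fractional Sobolev class near points of twice-differentiability. Standard properties give that $u_\varepsilon$ is bounded, semiconcave, monotone increasing to $u$, and that the infimum is attained at some $y_\varepsilon(x)$ with $|x-y_\varepsilon(x)|\to 0$ uniformly on compact sets as $\varepsilon \to 0$. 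By Aleksandrov's theorem, $u_\varepsilon$ is twice differentiable almost everywhere, which is precisely what allows the pointwise evaluation of $(-\Delta)^s_p u_\varepsilon$ using the near-diagonal $C^{1,1}$ information from semiconcavity and the $L^\infty$ control of $u$ in the tail.

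The next step is to show that $u_\varepsilon$ is a viscosity supersolution of a perturbed equation on a slightly smaller subdomain $\Omega'\subset\subset\Omega$. If $\varphi\in C^2$ touches $u_\varepsilon$ from below at $x_0$, then a suitable translate touches $u$ from below at the minimizing point $y_\varepsilon(x_0)$, and, using the translation invariance of $(-\Delta)^s_p$, the viscosity inequality for $u$ at $y_\varepsilon(x_0)$ transfers to an inequality for $\varphi$ at $x_0$. The monotonicity of $f$ in $t$ is used to replace $u(y_\varepsilon(x_0))$ by $u_\varepsilon(x_0)$ in the lower-order term (with the correct sign), while Lipschitz continuity in $\eta$ absorbs the small discrepancy between $D_s^p\varphi$ evaluated at $x_0$ and at $y_\varepsilon(x_0)$. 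Combining this with the punctual second-order differentiability, one obtains the pointwise inequality
\[
(-\Delta)^s_p u_\varepsilon(x) \,\geq\, f_\varepsilon(x) \qquad \text{for a.e. } x \in \Omega',
\]
where $f_\varepsilon(x)$ is an approximation of $f(x,u(x),D_s^p u(x))$ built out of $y_\varepsilon(x)$, $u_\varepsilon(x)$ and $D_s^p u_\varepsilon(x)$.

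Testing the pointwise inequality against a nonnegative $\varphi \in C_c^\infty(\Omega')$ and symmetrizing the kernel in the standard way yields the weak formulation
\[
\int\!\!\int_{\R^{2n}} \frac{|u_\varepsilon(x)-u_\varepsilon(y)|^{p-2}(u_\varepsilon(x)-u_\varepsilon(y))(\varphi(x)-\varphi(y))}{|x-y|^{n+sp}}\,dx\,dy \;\geq\; \int_{\Omega'} f_\varepsilon(x)\varphi(x)\,dx,
\]
and the final task is to let $\varepsilon \to 0$. The monotone pointwise convergence $u_\varepsilon \nearrow u$, together with standard fractional Sobolev estimates, lets one pass to the limit in the bilinear term. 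On the right-hand side one uses uniform continuity and Lipschitz dependence in $\eta$ of $f$, together with $y_\varepsilon(x)\to x$ and $u_\varepsilon(x)\to u(x)$.

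The main obstacle is the passage to the limit in the nonlocal gradient term $D_s^p u_\varepsilon$ inside $f$. Unlike the local case, no Sobolev compactness is directly available for $D_s^p u_\varepsilon$, so one must produce uniform integrability via the energy estimate obtained from testing against $u_\varepsilon$ itself. This is precisely where the exponent $(p-1)/p$ in the growth assumption \eqref{growth} becomes essential: since $D_s^p u$ plays the role of $|\nabla u|^p$, the bound $|D_s^p u|^{(p-1)/p}$ is the nonlocal counterpart of the $|\nabla u|^{p-1}$ growth in \eqref{localGrowth}, and Young's inequality allows one to absorb this term into the natural $W^{s,p}$ energy, yielding the uniform bounds needed for dominated convergence in $f_\varepsilon \to f(x,u,D_s^p u)$.
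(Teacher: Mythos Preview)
Your outline follows the paper's strategy closely, but there is a genuine gap at the step where you write ``testing the pointwise inequality against a nonnegative $\varphi \in C_c^\infty(\Omega')$ and symmetrizing the kernel in the standard way yields the weak formulation.'' This symmetrization (nonlocal integration by parts) turning $\int ((-\Delta)^s_p u_\varepsilon)\varphi\,dx$ into the double integral is an identity only for $C^2$ functions, and $u_\varepsilon$ is merely semiconcave and locally Lipschitz. The paper devotes Lemmas~\ref{lemma2} and~\ref{lemma22} to this point: one mollifies $u_\varepsilon$ to $u_{\varepsilon,\delta}\in C^2$, integrates by parts at the mollified level, and passes $\delta\to 0$ via Fatou-type arguments that rely on the one-sided Hessian bound $D^2 u_{\varepsilon,\delta}\le CI$ inherited from semiconcavity. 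Even so, only the inequality $\ge$ survives (not equality), and in the singular range $1<p\le 2/(2-s)$ the operator itself must also be regularized, the limit then resting on explicit barriers $\varphi_{\hat x}(y)=u(\hat x)+|y-\hat x|^q/(q\varepsilon^{q-1})$ coming from the inf-convolution structure. This is precisely the step the introduction flags as ``much more involved in the nonlocal setting,'' and it cannot be absorbed into the phrase ``in the standard way.''

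A secondary gap concerns the limit $\varepsilon\to 0$ on the right-hand side. You invoke dominated convergence, but that would require a.e.\ convergence of $D_s^p u_\varepsilon$ to $D_s^p u$, which is not a priori available. The paper instead uses the Caccioppoli inequality (Proposition~\ref{Caccioppoli}) to get uniform energy bounds, and then tests the weak inequality for $u_\varepsilon$ with $\psi=(u-u_\varepsilon)\theta$ in a Minty-type monotonicity argument to upgrade weak to strong $L^p$ convergence of the difference quotients $(u_\varepsilon(x)-u_\varepsilon(y))/|x-y|^{(n+sp)/p}$; only then does Lemma~\ref{lemma3} deliver $\int f_\varepsilon(x,u_\varepsilon,D_s^p u_\varepsilon)\varphi \to \int f(x,u,D_s^p u)\varphi$. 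Your Young-inequality absorption via the exponent $(p-1)/p$ is correct and is indeed how the Caccioppoli estimate closes, but it produces only uniform bounds, not the strong convergence needed to pass $D_s^p u_\varepsilon$ through the nonlinearity $f$.
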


Notice that, since $D_s^pu$ has homogeneity $p$ and $\eta$ corresponds to $D_s^pu$ in problem \eqref{problem}, the growth required in \eqref{growth} is the same as in the local case (see \eqref{localGrowth}).  To prove Theorem \ref{ViscToWeak} we need to pass from the pointwise sense of the viscosity solutions (see Definition \ref{defVisc}) to the integral form of the weak ones (see Definition \ref{defWeak}). We will do it starting, as in \cite{JJ, MO}, by regularizing the function using infimal convolutions. This precise way of regularization will give us useful information on the first and second derivatives, fundamental to carry out convergence arguments. We want to highlight that recovering the weak formulation on the original function is much more involved in the nonlocal setting. Indeed, in the classical framework a key point is to write the operator $\Delta_p$ in the form
$$\Delta_p u=|\nabla u|^{p-2}\left(\Delta u+(p-2)D^2u\frac{\nabla u}{|\nabla u|}\cdot \frac{\nabla u}{|\nabla u|}\right),$$
where $D^2u$ is the Hessian matrix of $u$. This expression permits to use the information on the sign of $D^2u$ in a direct way to apply arguments of convergence of integrals of Fatou's type. However an explicit formulation like this does not hold in the fractional case where, {for an approximating function $u_\delta\to u$}, we are obliged to extract uniform information directly from a term of the form
$$\frac{|u_\delta(x)-u_\delta(y)|^{p-2}(u_\delta(x)-u_\delta(y))}{|x-y|^{n+sp}},$$ 
which is not obvious at all. Furthermore, the fact that $(-\Delta)_p^s$ is an integral operator introduces extra difficulties, since we have to pass to the limit through two integrals, one coming from the weak formulation and one coming form the operator itself. Specially involved is the singular range $1<p\leq 2/(2-s)$. In such case the operator is not well defined even for smooth functions and hence we need to regularize in addition the operator, performing a very delicate argument to pass to the limit (see Lemmas \ref{lemma2} and \ref{lemma22}).
\medskip

The reverse statement, weak solutions being viscosity, is strongly connected with comparison arguments, and the next class of functions needs to be considered.

\begin{definition}Let $u$ be a weak supersolution to \eqref{problem} in $D\subseteq \Omega$. We say that $(u, f)$ satisfies the comparison principle property {\em (CPP)} in $D$ if for every weak subsolution $v$ of \eqref{problem} such that $u\geq v$ a.e. in $\R^n\setminus D$ we have $u\geq v$ a.e. in $D$.
\end{definition}

Although some comparison principles are currently available in the literature for right hand sides of the form $f(x,u)$, this is a delicate issue to be explored in more generality (as far as we know nothing has been proved for nonlinearities involving gradient-type terms). In Subsection \ref{sub_comp} we give a brief description of the known results.

The following implication is proved.

\begin{theorem}\label{WeakToVisc}
Let $1<p<+\infty$. Assume $u$ is a continuous weak supersolution of \eqref{problem} and $f=f(x,t,\eta)$ {is continuous in $\Omega\times\R\times\R^n$} and Lipschitz continuous in $\eta$. If $(CPP)$ holds then $u$ is a viscosity supersolution of \eqref{problem}.
\end{theorem}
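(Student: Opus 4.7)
\textbf{Proof plan for Theorem \ref{WeakToVisc}.} I would argue by contradiction, following the strategy used in \cite[Theorem 1.5]{MO} for the local case, adapted to our nonlocal setting. Suppose $u$ is a continuous weak supersolution but fails to be a viscosity supersolution. Then there exist $x_0 \in \Omega$, a radius $r>0$ with $\overline{B_r(x_0)} \subset \Omega$, and a $C^2$ function $\varphi$ touching $u$ from below at $x_0$ such that, setting $\varphi_r := \varphi$ on $B_r(x_0)$ and $\varphi_r := u$ on $\R^n\setminus B_r(x_0)$,
\begin{equation*}
(-\Delta)_p^s \varphi_r(x_0) - f\bigl(x_0,\varphi(x_0),D_s^p\varphi_r(x_0)\bigr) \;<\; 0.
\end{equation*}
By replacing $\varphi$ with $\varphi(x) - \sigma|x-x_0|^4$ for a small $\sigma>0$, I can additionally assume the touching is strict, i.e.\ $\varphi<u$ on $B_r(x_0)\setminus\{x_0\}$, while preserving the strict pointwise inequality above (the fourth-order perturbation leaves the first and second derivatives at $x_0$ unchanged and produces arbitrarily small tail contributions after shrinking $r$).

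The next step is to construct a competitor that contradicts (CPP). For $\epsilon>0$ small, set $\psi_\epsilon := \varphi + \epsilon$; since $u-\varphi>0$ on $\overline{B_r(x_0)}\setminus\{x_0\}$, the open set $V_\epsilon := \{x\in B_r(x_0) : \psi_\epsilon(x) > u(x)\}$ is nonempty, contains $x_0$, is compactly contained in $B_r(x_0)$, and shrinks to $\{x_0\}$ as $\epsilon\to 0^+$. Define
\begin{equation*}
w(x) := \psi_\epsilon(x)\,\chi_{V_\epsilon}(x) + u(x)\,\chi_{\R^n\setminus V_\epsilon}(x),
\end{equation*}
which is continuous on $\R^n$ (because $\psi_\epsilon = u$ on $\partial V_\epsilon$), agrees with $u$ outside $V_\epsilon$, and satisfies $w(x_0) = u(x_0) + \epsilon$. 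Using the continuity of the maps $x\mapsto (-\Delta)_p^sw(x)$ and $x\mapsto D_s^pw(x)$ on $V_\epsilon$ (where $w$ is $C^2$), the continuity of $f$, and its Lipschitz dependence on $\eta$, the strict inequality at $x_0$ propagates to all of $V_\epsilon$ provided $\epsilon$, and therefore $\mathrm{diam}\,V_\epsilon$, is taken sufficiently small; this yields the pointwise strict subsolution inequality
\begin{equation*}
(-\Delta)_p^s w(x) < f\bigl(x,w(x),D_s^pw(x)\bigr) \quad \text{for all } x\in V_\epsilon.
\end{equation*}
A Fubini-type rewriting of $\int (-\Delta)_p^s w(x)\,\phi(x)\,dx$ as a double integral then upgrades this to the weak subsolution inequality against any nonnegative $\phi\in C_c^\infty(V_\epsilon)$. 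Since $u$ is a weak supersolution in $V_\epsilon\subset\Omega$ and $w=u$ in $\R^n\setminus V_\epsilon$, invoking (CPP) on $V_\epsilon$ forces $w\leq u$ a.e.\ in $V_\epsilon$, contradicting $w(x_0)-u(x_0)=\epsilon>0$ together with the continuity of $u$ and $w$.

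The main obstacle I anticipate is the passage from the pointwise strict subsolution inequality to the weak formulation for $w$. Because $(-\Delta)_p^s$ is nonlinear and $w$ is only smooth inside $V_\epsilon$ while equal to the merely continuous $u$ outside, a direct integration by parts is unavailable. Outside the singular regime, the desired upgrade follows from a Fubini identity combined with the pointwise inequality on $V_\epsilon$, but in the range $1<p\leq 2/(2-s)$ the principal value defining the operator is delicate even for smooth functions and one should expect to regularize the operator itself along the lines of Lemmas \ref{lemma2} and \ref{lemma22} used in the proof of Theorem \ref{ViscToWeak}. A secondary technical point is the careful control of the tail integrals and of the stability of $(-\Delta)_p^s w$ and $D_s^p w$ as $V_\epsilon$ shrinks to $\{x_0\}$, which relies on the $L^\infty$ bound on $u$ and the $C^2$ regularity of $\varphi$ near $x_0$.
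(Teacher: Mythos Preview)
Your contradiction setup and the idea of applying (CPP) to a perturbed competitor are correct, but the specific perturbation you choose does not work in the nonlocal setting, and this is not a secondary technical point but the heart of the matter. The paper itself flags this explicitly: ``The nonlocal nature of the fractional derivative $D_s^p$ does not match well with standard arguments of perturbation by a constant.'' Concretely, your competitor $w$ equals $u$ on $B_r(x_0)\setminus V_\epsilon$, whereas the test function $\varphi_r$ equals $\varphi$ there. This region has positive measure that does \emph{not} shrink as $\epsilon\to 0$ (only $V_\epsilon$ shrinks), and both $(-\Delta)_p^s w(x_0)$ and $D_s^p w(x_0)$ see the values of $w$ on that region. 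In particular $D_s^p w(x_0)-D_s^p\varphi_r(x_0)$ is of order one, not $o_\epsilon(1)$: for $y\in B_r(x_0)\setminus V_\epsilon$ one compares $|u(x_0)+\epsilon-u(y)|^p$ to $|u(x_0)-\varphi(y)|^p$, and these differ by a fixed amount depending on $u-\varphi$ rather than on $\epsilon$. Hence the Lipschitz dependence of $f$ on $\eta$ cannot be used to transfer the strict inequality from $\varphi_r$ to $w$, and the claimed pointwise subsolution inequality for $w$ is unjustified. (In the local problem of \cite{MO} this issue is absent because $\Delta_p(\varphi+\epsilon)=\Delta_p\varphi$ and $\nabla(\varphi+\epsilon)=\nabla\varphi$ are purely local quantities.)

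The paper fixes exactly this by replacing the constant shift with a perturbation $\psi_r+\theta\eta$, where $\eta\in C_0^2(B_{r_2/2}(x_0))$ is a smooth bump with $\eta(x_0)=1$. Because the perturbation is $C^2$ and compactly supported in a small ball, Lemma~\ref{Lema1} gives $\sup_{B_\rho(x_0)}|D_s^p\psi_r-D_s^p(\psi_r+\theta\eta)|<\varepsilon$ for $\theta$ small, and \cite[Lemma~3.9]{KKL} gives the analogous smallness for $(-\Delta)_p^s$. These two stability estimates, combined with the Lipschitz hypothesis on $f$, preserve the strict subsolution inequality for $\psi_r+\theta\eta$ on a fixed ball; since $\psi_r+\theta\eta=\psi_r\le u$ outside $\mathrm{supp}\,\eta$, (CPP) then yields $u(x_0)\ge\psi_r(x_0)+\theta$, the desired contradiction. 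To repair your argument you would need precisely such a stability lemma for $D_s^p$ under your truncation-by-$u$ construction, and no such lemma is available.
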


The proof of this result runs by contradiction, and relies on the comparison of $u$ with an appropriate perturbation of itself. The nonlocal nature of the fractional derivative $D^p_s$ does not match well with standard arguments of perturbation by a constant. Instead, it forces us to consider and deal with very particular perturbations (constructed by adding precise $C^2$ functions) that are more suitable to the integral operators.

\medskip

We would finally like to mention that problems involving operator \eqref{operador} have concentrated a large interest in the last years. Besides the previously mentioned \cite{KKL, KKP2}, existence and uniqueness of viscosity solutions, as well as convergence to the $p$-Laplace equation, are studied in \cite{IN} and different regularity results for weak solutions can be found in, for instance, \cite{BPV, BCF, BL, dCLP, dCLP2, IMS, KMS, KMS2, P} (see also the references therein). We also want to point out that the results in this work can be straightforward generalized for a larger class of integro-differential operators, as in \cite{KKL, KKP}. They hold for every operator $\mathcal{L}_K$ defined as
$$\mathcal{L}_Ku(x):=\mbox{p.v.}\int_{\R^n}|u(x)-u(y)|^{p-2}(u(x)-u(y))K(x,y)dy,$$
and the corresponding derivative
$$D_{s,K}^pu(x):= \int_{\R^n}|u(x)-u(y)|^pK(x,y)dy,$$
where the kernel $K$ satisfies the following properties:
\begin{itemize}
\item[(i)] $K(x,y)=K(y,x)$ for all $x,y\in\R^n$.
\item[(ii)] $K(x+z,y+z)=K(x,y)$ for all $x,y,z\in \R^n$, $x\neq y$.
\item[(iii)] $\Lambda^{-1}\leq K(x,y)|x-y|^{n+sp}\leq \Lambda$ for all $x,y\in \R^n$, $x\neq y$, where $\Lambda\geq 1$ is a constant.
\item[(iv)] The map $x\mapsto K(x,y)$ is continuous in $\R^n\setminus \{y\}$.
\end{itemize}
The fractional $p$-Laplacian corresponds to the case $K(x,y)=|x-y|^{-n-sp}$ and, for simplicity of the exposition, we will restrict ourselves to this case.

\medskip

The rest of the paper is organized as follows: in Section 2 we introduce the notions of weak and viscosity solutions and also some preliminarie results like a Caccioppoli type inequality and a  continuity property of the operator $D_s^p$. Section 3 deals with the approximation by infimal convolutions and the proof of Theorem \ref{ViscToWeak}. In Section 4 we prove Theorem \ref{WeakToVisc} and we make a brief review on the comparison principles in the literature. 
\medskip

Along the work $C$ will denote a positive constant that may change from line to line.

\section{Preliminaries}
\subsection{Notions of solutions}
Given a measurable function $u:\mathbb{R}^{n}\to\mathbb{R}$ we define the Gagliardo seminorm associated to a, possibly unbounded, domain $\Omega$ as
$$[u]_{s,p,\Omega}:=\left(\iint_{\Omega\times\Omega}\frac{|u(x)-u(y)|^{p}}{|x-y|^{n+sp}}dx\, dy\right)^{1/p},$$
and the Soblev space
$$W^{s,p}(\Omega):=\{u\in L^{p}(\Omega):\, [u]_{s,p,\Omega}<\infty\},$$
(see for instance \cite{adams, guida}). We will also denote
$$L_{sp}^{p-1}(\mathbb{R}^{n}):=\{u\in L^{p-1}_{loc}(\mathbb{R}^{n}):\, \int_{\R^n}\frac{|u(x)|^{p-1}}{(1+|x|)^{n+sp}}\, dx<\infty\},$$
the space of functions with appropriate decay at infinity. 
\begin{definition}\label{defWeak}
A function $u\in {W^{s,p}(\Omega)}\cap L^{p-1}_{sp}(\R^n)$ is a weak supersolution (subsolution) of \eqref{problem} if 
$$\int_{\R^n}\int_{\R^n}\frac{|u(x)-u(y)|^{p-2}(u(x)-u(y))(\varphi(x)-\varphi(y))}{|x-y|^{n+sp}}dx\, dy\geq (\leq )\int_\Omega f(x,u,D_s^pu)\varphi dx,$$
for all non-negative $\varphi\in C_0^\infty(\Omega)$. We say that $u$ is a weak solution of \eqref{problem} if it is both a weak supersolution and subsolution to the problem.
\end{definition} 
Notice that the double integral appearing in this definition can be written in the domain 
$$Q_{K}:=(K\times\mathbb{R}^{n})\cup((\mathbb{R}^{n}\setminus K)\times K)=\R^{2n}\setminus(\mathbb{R}^{n}\setminus K)^{2},$$ 
where $K:=\mbox{supp}(\varphi)$. Without loss of generality, along the work we will write indistinctly the double integral in $Q_{K}$ or in the whole space $\mathbb{R}^{2n}$ when there is no need to specify. 
\medskip

It is well known that the classical $p$-Laplacian presents additional difficulties when $1<p<2$, since div$(|\nabla u|^{p-2}\nabla u)$ is not well defined at the points where $\nabla u$ vanishes, and any valid notion of viscosity solution has to deal with this issue. The same problem holds here. In the nonlocal setting the singular range corresponds to $1<p\leq \frac{2}{2-s}$, case in which the integral in \eqref{operador} is not well defined even for smooth functions. To give pointwise sense to the operator we need to work with a more restricted class of functions (see \cite[Lemma 3.7]{KKL}). Given an open set $\widetilde\Omega\subset\Omega$, we define
$$
C^{2}_{\beta}(\widetilde\Omega):=\{u\in C^{2}(\widetilde\Omega):\, \sup_{x\in\widetilde\Omega}\left(\frac{\min\{\dist (x, N_{u}),1\}^{\beta-1}}{|\nabla u(x)|}+\frac{|D^2 u(x)|}{(\dist(x, N_{u}))^{\beta-2}}\right)<+\infty\},
$$
where $N_u$ is the set of critical points associated to the regular function $u$, that is,
$$N_{u}:=\{x\in\Omega:\, \nabla u(x)=0\}.$$
In the spirit of \cite{KKL}, we can give the definition of viscosity solution.
\begin{definition}\label{defVisc}
A function $u:\R^n\rightarrow [-\infty,+\infty]$ is a viscosity supersolution (subsolution) of \eqref{problem} in $\Omega$ if:
\begin{itemize}
\item[(i)] $u<+\infty$ ($u>-\infty$) a.e. in $\R^n$, $u>-\infty$ ($u<+\infty$) a.e. in $\Omega$.
\item[(ii)] $u$ is lower (upper) semicontinuous in $\Omega$.
\item[(iii)] If $\psi \in C^2(B_r(x_0)){\cap L^{p-1}_{sp}(\R^n)}$ for some $B_r(x_0)\subset \Omega$ such that $\psi(x_0)=u(x_0)$, $\psi\leq u$ ($\psi\geq u$) in $\R^n$ and one of the following conditions hold:
\begin{itemize}
\item[(a)] $p>\frac{2}{2-s}$ or $\nabla \psi(x_0)\neq 0$,
\item[(b)] $1<p\leq \frac{2}{2-s}$, $\nabla \psi(x_0)=0$ such that $x_0$ is an isolated critical point of $\psi$, and $\psi \in C_\beta^2(B_r(x_0))$ for some $\beta>\frac{sp}{p-1}$,
\end{itemize}
then
$${(-\Delta)^s_p\psi(x_0)\geq (\leq) \,f(x_0,\psi(x_0),D_s^p \psi(x_0))}.$$
\item[(iv)] $u_-:=\max\{-u,0\}$ ($u_+:=\max\{u,0\}$) belongs to $L^{p-1}_{sp}(\R^n)$.
\end{itemize}
If it is at the same time viscosity super and subsolution we say that $u$ is a viscosity solution to \eqref{problem}.
\end{definition}

\begin{remark}
{In many applications the previous definition can be stated in a slightly more general form by allowing the test function to be $\psi\leq u$ only in $B_r(x_0)$ and $\psi=u$ in $\R^n\setminus B_r(x_0)$. This would be also our case if the right hand side of \eqref{problem} were $f=f(x,u)$. However, the dependence on $D_s^pu$ makes us need a larger set of test functions.}
\end{remark}
\subsection{Auxiliary results}
In this section we introduce some properties that will be used later in the proof of the main theorems. We first prove an arithmetical identity.
\begin{lemma}\label{lemmaL}
Let $1<p<+\infty$ and
\begin{equation}\label{laL}
L(\gamma):=|\gamma|^{p-2}\gamma,\quad \gamma\in \R.
\end{equation}
Then
\begin{equation}\label{ineq1}
L(a)-L(b)=(p-1)(a-b)\int_0^1|ta+(1-t)b|^{p-2}dt,\quad a,b\in \R.
\end{equation}
Furthermore,
\begin{equation}\label{ineq2}
\int_0^1|ta+(1-t)b|^{p-2}dt\leq 
\begin{cases}
C(|a|^{p-2}+|b|^{p-2})\quad \mbox{ if }p\geq 2,\\
C|a-b|^{p-2}\quad \mbox{ if }1<p<2,
\end{cases}
\end{equation}
where $C=C(p)>0$.
\end{lemma}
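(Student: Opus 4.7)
The plan is to treat the two assertions separately, with the identity in \eqref{ineq1} following from the fundamental theorem of calculus and the bounds in \eqref{ineq2} following from elementary estimates, split according to whether $p\geq 2$ or $1<p<2$.

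For \eqref{ineq1}, I would observe that $L$ is $C^{1}$ away from $0$ with $L'(\gamma)=(p-1)|\gamma|^{p-2}$, and that $L'$ is locally integrable on $\R$ since $p-2>-1$ (note this requires $p>1$). Hence $L$ is absolutely continuous on every bounded interval, so for arbitrary $a,b\in\R$,
\begin{equation*}
L(a)-L(b)=\int_{0}^{1}\frac{d}{dt}L\bigl(ta+(1-t)b\bigr)\,dt=(p-1)(a-b)\int_{0}^{1}|ta+(1-t)b|^{p-2}\,dt,
\end{equation*}
which is \eqref{ineq1}. No principal-value issue arises because the set of $t$ with $ta+(1-t)b=0$ has measure zero (if $a\neq b$), and if $a=b$ both sides of \eqref{ineq1} are trivially zero (interpreting $0\cdot\infty$ as $0$ in the singular range).

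For \eqref{ineq2} when $p\geq 2$, I would just use the triangle inequality $|ta+(1-t)b|\leq |a|+|b|$, raise to the non-negative power $p-2$, and apply the elementary bound $(|a|+|b|)^{p-2}\leq C_{p}(|a|^{p-2}+|b|^{p-2})$, valid either by convexity of $x\mapsto x^{p-2}$ when $p\geq 3$ or by subadditivity of the concave map when $2\leq p<3$. Integration in $t\in[0,1]$ yields the claim.

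For \eqref{ineq2} when $1<p<2$, the bound $|a-b|^{p-2}$ suggests the change of variables $u=ta+(1-t)b$. Assuming without loss of generality $a\neq b$ (the case $a=b$ gives $|a-b|^{p-2}=+\infty$, so the inequality is automatic), this gives
\begin{equation*}
\int_{0}^{1}|ta+(1-t)b|^{p-2}\,dt=\frac{1}{|a-b|}\int_{\min(a,b)}^{\max(a,b)}|u|^{p-2}\,du.
\end{equation*}
Then I would split into the case where $a$ and $b$ have the same sign (evaluating the integral explicitly as a difference $(\max|\cdot|)^{p-1}-(\min|\cdot|)^{p-1}$ up to the factor $p-1$, and using the concavity inequality $x^{p-1}-y^{p-1}\leq(x-y)^{p-1}$ for $0\leq y\leq x$), and the case where they have opposite signs (where the integral becomes $(|a|^{p-1}+|b|^{p-1})/(p-1)$ and $|a-b|=|a|+|b|$, and one concludes from $|a|^{p-1}+|b|^{p-1}\leq 2(|a|+|b|)^{p-1}$). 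In both subcases one obtains a bound of the form $C(p)|a-b|^{p-2}$.

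The only slightly delicate step is the justification of the fundamental theorem of calculus when the segment from $b$ to $a$ passes through the origin in the singular range, but the local integrability of $|\gamma|^{p-2}$ for $p>1$ handles this cleanly; everything else is a case-by-case elementary computation.
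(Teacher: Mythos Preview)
Your proof is correct and follows essentially the same strategy as the paper's: the identity \eqref{ineq1} via the fundamental theorem of calculus, the $p\geq 2$ bound by the triangle inequality, and the $1<p<2$ bound via the linear change of variables in the integral. The only organizational difference is in the singular range: the paper normalizes by assuming $|a|\leq |b|$, pulling out $|b|^{p-2}$, and bounding the resulting one-parameter function $g(\alpha)=\int_0^1|t\alpha+1-t|^{p-2}\,dt$ uniformly for $\alpha\in[-1,1]$ (using the same substitution $\tau=t\alpha+1-t$), whereas you compute the integral directly and split into the same-sign and opposite-sign cases. Both routes yield the constant $C(p)=O\big(\tfrac{1}{p-1}\big)$; the paper's normalization avoids the case split, while your version is perhaps more transparent about where the concavity of $x\mapsto x^{p-1}$ enters.
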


\begin{proof}
We follow the ideas in the proof of \cite[Lemma 1]{KKP}. Consider
$$f(t):=|ta+(1-t)b|^{p-2}(ta+(1-t)b).$$
Thus
$$L(a)-L(b)=\int_0^1 f'(t)dt$$
and \eqref{ineq1} follows. 
The case $p\geq 2$ in \eqref{ineq2} is obtained straightforward.  For $1<p<2$ we define
$$g(\alpha):=\int_0^1|t\alpha+1-t|^{p-2}dt,\qquad \alpha\in [-1,1].$$
Notice that $g(1)=1$ and, for $\alpha\in [-1,1)$, making the change of variable $\tau:=t\alpha+1-t$ we have
$$g(\alpha)=\frac{1}{1-\alpha}\int_\alpha^1|\tau|^{p-2}d\tau\leq \frac{2}{p-1}.$$
Without loss of generality we assume {$|a|\leq |b|$}. Thus
\begin{equation*}\begin{split}
\int_0^1|ta+(1-t)b|^{p-2}dt&=|b|^{p-2}\int_0^1|t\frac{a}{b}+1-t|^{p-2}dt=|b|^{p-2}g\left(\frac{a}{b}\right)\\
&\leq \frac{4}{p-1} |a-b|^{p-2},
\end{split}\end{equation*}
since $|a-b|\leq 2|b|$ and $p-2<0$.
\end{proof}

The next result establishes a Caccioppoli type inequality that will be fundamental to prove Theorem \ref{ViscToWeak}. 
\begin{prop}\label{Caccioppoli}
Let {$u\in L^\infty(\R^n)$} be a weak supersolution of \eqref{problem}, and let $f:\Omega\times\R\times\R^n$ continuous and satisfying \eqref{growth}. Then there exists a constant {$C=C(p,K,\phi)>0$} such that
\begin{equation*}\begin{split}
\int_{K}\int_{\R^n}&\frac{|u(x)-u(y)|^p}{|x-y|^{n+sp}}\xi^p(x)dy\, dx\\
&\leq C\left[(\mbox{osc}(u))^p\left(\int_{K}\int_{\R^n}\frac{|\xi(x)-\xi(y)|^p}{|x-y|^{n+sp}}dy\, dx+{\gamma_{\infty,u}^p}\right)+\mbox{osc}(u)\right],
\end{split}\end{equation*}
for all $\xi\in C_0^\infty(\Omega)$, $0\leq \xi\leq 1$, where $\mbox{osc}(u):=\sup_{\R^n}u-\inf_{\R^n}u$, {$\gamma_{\infty,u}:=\max_{t\in(-\|u\|_{L^\infty(\Omega)},\|u\|_{L^\infty(\Omega)})}|\gamma(t)|$} and $K:=\mbox{supp}(\xi)$.
\end{prop}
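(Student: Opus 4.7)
The plan is to test the weak supersolution inequality from Definition \ref{defWeak} against the non-negative function $\varphi=(M-u)\xi^{p}$, where $M:=\sup_{\R^{n}}u$ (admissible by a standard density argument since $u\in L^{\infty}(\R^n)\cap W^{s,p}(\Omega)$ and $\xi\in C_{0}^{\infty}(\Omega)$, so that $\varphi\in W_{0}^{s,p}(\Omega)\cap L^{\infty}$). Expanding
\[
\varphi(x)-\varphi(y)=-(u(x)-u(y))\xi^{p}(x)+(M-u(y))(\xi^{p}(x)-\xi^{p}(y))
\]
and abbreviating $I:=\int_{K}\int_{\R^{n}}|u(x)-u(y)|^{p}|x-y|^{-n-sp}\xi^{p}(x)\,dy\,dx$, the weak supersolution inequality takes the form
\[
I\leq \iint\frac{|u(x)-u(y)|^{p-2}(u(x)-u(y))(M-u(y))(\xi^{p}(x)-\xi^{p}(y))}{|x-y|^{n+sp}}\,dx\,dy\,-\,\int_{\Omega}f(x,u,D_{s}^{p}u)\varphi\,dx.
\]

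For the first term on the right (the ``free-equation'' contribution) I would follow the standard Caccioppoli argument employed in \cite{KKP}: bound $|M-u(y)|\leq \mbox{osc}(u)$, use the elementary inequality $|\xi^{p}(x)-\xi^{p}(y)|\leq p\max\{\xi(x),\xi(y)\}^{p-1}|\xi(x)-\xi(y)|$, and apply a weighted Young's inequality to the product $|u(x)-u(y)|^{p-1}\xi^{p-1}|\xi(x)-\xi(y)|$, selecting the small parameter of order $1/\mbox{osc}(u)$ so that a fraction of $I$ can be absorbed back into the left-hand side. This delivers a bound of the form $\frac{1}{2}I+C(\mbox{osc}(u))^{p}\int_{K}\int_{\R^{n}}|\xi(x)-\xi(y)|^{p}|x-y|^{-n-sp}\,dy\,dx$. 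For the $f$ term, the growth assumption \eqref{growth} together with $0\leq(M-u)\xi^{p}\leq\mbox{osc}(u)$ on $K$ yields
\[
\left|\int_{\Omega}f(x,u,D_{s}^{p}u)\varphi\,dx\right|\leq \mbox{osc}(u)\left[\gamma_{\infty,u}\int_{K}(D_{s}^{p}u)^{(p-1)/p}\xi^{p}\,dx+\|\phi\|_{L^{\infty}(K)}|K|\right].
\]
Applying H\"older's inequality with exponents $p/(p-1)$ and $p$ to the factorization $(D_{s}^{p}u)^{(p-1)/p}\xi^{p-1}\cdot\xi$, and observing that $\int_{K}D_{s}^{p}u(x)\xi^{p}(x)\,dx=I$ by Fubini, one obtains
\[
\int_{K}(D_{s}^{p}u)^{(p-1)/p}\xi^{p}\,dx\leq I^{(p-1)/p}|K|^{1/p}.
\]
A further weighted Young inequality of the form $\gamma_{\infty,u}\,\mbox{osc}(u)\,I^{(p-1)/p}\leq\frac{1}{4}I+C\gamma_{\infty,u}^{p}(\mbox{osc}(u))^{p}$ then transfers the remaining $I$ contribution to the left-hand side, and collecting constants (depending only on $p$, $|K|$ and $\|\phi\|_{L^{\infty}(K)}$) yields the announced estimate.

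The main obstacle is the self-referential character of the bound: the quantity $I$ one wishes to control appears on the right-hand side as well, through the term $D_{s}^{p}u$ inside $f$. It is precisely the sub-critical exponent $(p-1)/p$ in \eqref{growth}---which, by the $p$-homogeneity of $D_{s}^{p}u$, is the natural nonlocal counterpart of the classical exponent $p-1$ in \eqref{localGrowth}---that makes the Young absorption feasible; any strictly larger exponent would produce a term dominating $I$ and the estimate would break down. A secondary technical point is the admissibility of $\varphi=(M-u)\xi^{p}$ as a test function, which is handled by the standard density of $C_{0}^{\infty}(\Omega)$ inside $W_{0}^{s,p}(\Omega)\cap L^{\infty}$ together with the $L^{\infty}$ bound on $u$.
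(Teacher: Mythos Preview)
Your proposal is correct and follows essentially the same route as the paper: test with $\varphi=(\sup_{\R^n}u-u)\xi^{p}$, split $\varphi(x)-\varphi(y)$ exactly as you do, absorb the cross term via the elementary inequality for $|\xi^{p}(x)-\xi^{p}(y)|$ together with Young's inequality, and then handle the $f$-term through the growth condition \eqref{growth} and a second Young absorption of $(D_{s}^{p}u)^{(p-1)/p}\xi^{p}$ back into $I$. The only cosmetic differences are that the paper uses $|\xi(x)|^{p-1}+|\xi(y)|^{p-1}$ instead of $\max\{\xi(x),\xi(y)\}^{p-1}$ and applies Young directly rather than H\"older-then-Young on the $f$-term; your additional remark on the admissibility of $\varphi$ by density is a point the paper leaves implicit.
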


\begin{proof}
Let us consider $\xi\in C_0^\infty(\Omega)$, $0\leq \xi\leq 1$ with $K:=\mbox{supp}(\xi)$ and 
$$\varphi(x):=\begin{cases}
(\sup_{\R^n} u -u(x))\xi^p(x),\;\;x\in\Omega,\\
0,\;\;x\in \R^n\setminus\Omega.
\end{cases}$$
Since $u$ is a weak supersolution of \eqref{problem}, testing with $\varphi$ and noticing that
\begin{equation*}\begin{split}
(\sup_{\R^n} u &-u(x))\xi^p(x)-(\sup_{\R^n} u -u(y))\xi^p(y)\\
&=-(u(x)-u(y))\xi^p(x)+(\xi^p(x)-\xi^p(y))(\sup_{\R^n}u-u(y)),
\end{split}\end{equation*}
we have
\begin{equation*}\begin{split}
\int_K &f(x,u,D_su)(\sup_{\R^n} u -u)\xi^pdx\leq -\int_K\int_{\R^n}\frac{|u(x)-u(y)|^p}{|x-y|^{n+sp}}\xi^p(x)dy\, dx\\
&+\iint_{Q_K}\frac{|u(x)-u(y)|^{p-2}(u(x)-u(y))(\xi^p(x)-\xi^p(y))(\sup_{\R^n}u-u(y))}{|x-y|^{n+sp}}dx\, dy,
\end{split}\end{equation*}
and thus
\begin{equation*}\begin{split}
\int_K\int_{\R^n}\frac{|u(x)-u(y)|^p}{|x-y|^{n+sp}}\xi^p(x)dy\, dx
\leq &\iint_{Q_K}\frac{|u(x)-u(y)|^{p-1}|\xi^p(x)-\xi^p(y)|\mbox{osc}(u)}{|x-y|^{n+sp}}dx\, dy\\
&-\int_K f(x,u,D_s^pu)(\sup_{\R^n} u -u)\xi^pdx.
\end{split}\end{equation*}
Let $\delta>0$ small to be chosen later.  Since $1<p<+\infty$ we have that
$$|\xi^p(x)-\xi^p(y)|\leq {C(p)} |\xi(x)-\xi(y)|\left(|\xi(x)|^{p-1}+|\xi(y)|^{p-1}\right),$$
then, by applying Young's inequality with exponents $p/p-1$ and $p$ it follows
\begin{equation*}\begin{split}
&\int_K\int_{\R^n}\frac{|u(x)-u(y)|^p}{|x-y|^{n+sp}}\xi^p(x)dy\, dx
\leq \delta {C(p)}\iint_{Q_K}\frac{|u(x)-u(y)|^{p}(|\xi(x)|^p+|\xi(y)|^p)}{|x-y|^{n+sp}}dx\, dy\\
&\;+\delta^{1-p} {C(p)}(\mbox{osc}(u))^p\iint_{Q_K}\frac{|\xi(x)-\xi(y)|^p}{|x-y|^{n+sp}}dx\, dy-\int_K f(x,u,D_s^pu)(\sup_{\R^n} u -u)\xi^pdx.
\end{split}\end{equation*}
Choosing $\delta$ small, by the symmetry of the first term in the r.h.s, and applying \eqref{growth}, we get
\begin{equation}\label{spar}
\begin{split}
\int_K\int_{\R^n}&\frac{|u(x)-u(y)|^p}{|x-y|^{n+sp}}\xi^p(x)dy\, dx
\leq {C(p)}\left[(\mbox{osc}(u))^p\iint_{Q_K}\frac{|\xi(x)-\xi(y)|^p}{|x-y|^{n+sp}}dx\, dy\right.\\
&\quad \left.+{\gamma_{\infty,u}}\int_K |D_s^pu|^{\frac{p-1}{p}}\mbox{osc}(u)\xi^pdx+\|\phi\|_{L^\infty}|K|\mbox{osc}(u)\right],
\end{split}
\end{equation}
where {$\gamma_{\infty,u}:=\max_{t\in(-\|u\|_{L^\infty(\Omega)},\|u\|_{L^\infty(\Omega)})}|\gamma(t)|$}. By Young's inequality again and the fact that $\xi\leq 1$,
\begin{equation*}\begin{split}
{\gamma_{\infty,u}}\int_K &|D_s^pu|^{\frac{p-1}{p}}\mbox{osc}(u)\xi^pdx\leq {\gamma_{\infty,u}}\int_K |D_s^pu|^{{\frac{p-1}{p}}}\mbox{osc}(u)\xi^{p-1}dx\\
&\leq \delta \int_K |D_s^p u|\xi^p dx+\delta^{1-p}{\gamma_{\infty,u}^pC(p,
K)}(\mbox{osc}(u))^p\\
&\leq \delta \int_K\int_{\R^n}\frac{|u(x)-u(y)|^p}{|x-y|^{n+sp}}\xi^p(x)dx\, dy+\delta^{1-p}{\gamma_{\infty,u}^pC(p,
\Omega)}(\mbox{osc}(u))^p.
\end{split}\end{equation*}
Thus, choosing $\delta$ small, from \eqref{spar} we have
\begin{equation*}\begin{split}
\int_K\int_{\R^n}&\frac{|u(x)-u(y)|^p}{|x-y|^{n+sp}}\xi^p(x)dy\, dx
\\
&\leq {C(p,\phi, K)}\left[(\mbox{osc}(u))^p\left(\int_K\int_{\R^n}\frac{|\xi(x)-\xi(y)|^p}{|x-y|^{n+sp}}dy\, dx+{\gamma_{\infty,u}^p}\right)+\mbox{osc}(u)\right].
\end{split}\end{equation*}
\end{proof}
To conclude this section we show the continuity of the operator $D_s^p$ with respect to small regular perturbations, that will be crucial in the proof of Theorem \ref{WeakToVisc}. 
\begin{lemma}\label{Lema1}
Let $r>0$, $x_0\in\mathbb{R}^{n}$ and {$F\in L^{p-1}_{sp}(\mathbb{R}^{n})$, Lipschitz continuous in $B_r(x_0)$}. For every $\varepsilon>0$, ${0<\rho<r}$ and $\eta\in C^{2}_{0}(B_{r}(x_0))$ verifying $0\leq\eta\leq 1$, there exists a constant $\widetilde{\theta}=\widetilde{\theta}(\varepsilon,\rho,\eta)>0$ such that
$$\sup_{B_{\rho}(x_0)}|D_s^{p}F-D_s^{p}(F+\theta\eta)|<\varepsilon\qquad \mbox{ for all }0\leq\theta<\widetilde{\theta}.$$
\end{lemma}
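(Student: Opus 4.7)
My plan is to bound the difference pointwise and then split the domain of integration according to whether $y$ lies inside $B_r(x_0)$, where both $F$ and $\eta$ are regular, or outside, where only the tail behavior of $F$ is available. Writing $a(x,y):=F(x)-F(y)$ and $b(x,y):=\theta(\eta(x)-\eta(y))$, one has
\begin{equation*}
D_s^p F(x)-D_s^p(F+\theta\eta)(x)=\int_{\R^n}\frac{|a|^p-|a+b|^p}{|x-y|^{n+sp}}\,dy.
\end{equation*}
Applying the mean value theorem to the map $t\mapsto|a+tb|^p$ together with the elementary inequality $(|a|+|b|)^{p-1}\leq C(p)(|a|^{p-1}+|b|^{p-1})$, valid for every $p>1$, yields the pointwise bound
\begin{equation*}
\bigl||a|^p-|a+b|^p\bigr|\leq C(p)\,|b|\bigl(|a|^{p-1}+|b|^{p-1}\bigr).
\end{equation*}

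Next I would estimate $\int_{B_r(x_0)}+\int_{\R^n\setminus B_r(x_0)}$ separately and uniformly for $x\in B_\rho(x_0)$. On the near piece $y\in B_r(x_0)$, the Lipschitz continuity of $F$ on $B_r(x_0)$ gives $|a|\leq C|x-y|$, while the $C^2$ regularity of $\eta$ yields $|b|\leq C\theta|x-y|$. The integrand is therefore dominated by $C\theta(1+\theta^{p-1})|x-y|^{p-(n+sp)}$, and since $p(1-s)>0$ the integral over this bounded region is finite and of size $O(\theta)$, uniformly in $x\in B_\rho(x_0)$.

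On the far piece $y\in\R^n\setminus B_r(x_0)$, one has $\eta(y)=0$, so $|b|=\theta|\eta(x)|\leq\theta$; moreover $|x-y|\geq r-\rho>0$, and $|x-y|\sim 1+|y|$ for $|y|$ large, with constants depending only on $x_0$, $\rho$, $r$. Thus the integrand is bounded by $C\theta\bigl(|F(x)|^{p-1}+|F(y)|^{p-1}+\theta^{p-1}\bigr)|x-y|^{-(n+sp)}$. Since $F$ is bounded on $B_\rho(x_0)$ (by Lipschitz continuity) and $\int_{\R^n\setminus B_r(x_0)}|x-y|^{-(n+sp)}\,dy$ is finite uniformly in $x\in B_\rho(x_0)$, the contributions from the first and third terms are of size $O(\theta+\theta^p)$. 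The middle contribution is controlled by the assumption $F\in L^{p-1}_{sp}(\R^n)$ after replacing $|x-y|^{-(n+sp)}$ by a constant multiple of $(1+|y|)^{-(n+sp)}$.

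Combining both estimates gives $\sup_{x\in B_\rho(x_0)}|D_s^p F(x)-D_s^p(F+\theta\eta)(x)|\leq C(\theta+\theta^p)$, so choosing $\widetilde\theta=\widetilde\theta(\varepsilon,\rho,\eta)$ small enough makes the supremum strictly less than $\varepsilon$. I expect the main obstacle to be the uniformity in $x\in B_\rho(x_0)$ on the far piece: without both the integrability condition $F\in L^{p-1}_{sp}(\R^n)$ and the geometric equivalence $|x-y|\sim 1+|y|$ for $|y|$ large, the tail would not close uniformly, and the dependence of $\widetilde\theta$ on $\rho$ (through $r-\rho$) is precisely what is captured in this step.
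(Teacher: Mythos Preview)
Your argument is correct and follows the same overall strategy as the paper: the pointwise algebraic inequality $\bigl||a|^p-|a+b|^p\bigr|\leq C(p)\,|b|\,(|a|^{p-1}+|b|^{p-1})$, a near/far decomposition, Lipschitz regularity of $F$ and $\eta$ on the near piece, and the tail condition $F\in L^{p-1}_{sp}(\R^n)$ on the far piece.

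The one notable difference is in the decomposition. The paper splits at a small auxiliary ball $B_\delta(x)$, $0<\delta<r-\rho$, and on this near piece it does \emph{not} use the difference inequality but rather the triangle inequality, bounding $\int_{B_\delta(x)}|F(x)-F(y)|^p/|x-y|^{n+sp}$ and $\int_{B_\delta(x)}|(F+\theta\eta)(x)-(F+\theta\eta)(y)|^p/|x-y|^{n+sp}$ separately; this yields a final estimate of the form $C(\delta^{p(1-s)}+\theta\,\delta^{-sp})$ and forces a two-step choice---first $\delta$ small, then $\theta$ small depending on $\delta$. Your version splits instead at the fixed ball $B_r(x_0)$ and applies the difference inequality on the near piece as well, producing the sharper bound $C(\theta+\theta^p)$ in a single step. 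This is a genuine simplification: it avoids the auxiliary parameter entirely and makes the dependence $\widetilde\theta=\widetilde\theta(\varepsilon,\rho,\eta)$ (through the Lipschitz constant, $r-\rho$, and the tail of $F$) more transparent.
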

\begin{proof}
Let us fix $\varepsilon>0$ and {$\rho$ such that $0<\rho<r$} and consider {a function $\eta\in C^{2}_{0}(B_{r}(x_0))$ such that $0\leq\eta\leq 1$}. Given $0<\theta<1$, if $x\in B_{\rho}(x_0)$ it follows that
\begin{equation*}
\begin{split}
|D_s^{p}F(x)-&D_s^{p}(F+\theta\eta)(x)|\leq\int_{B_{\delta}(x)}\frac{|F(x)-F(y)|^{p}}{|x-y|^{n+sp}}\, dy\\
&+\int_{B_{\delta}(x)}\frac{|F(x)+\theta\eta(x)-F(y)-\theta\eta(y)|^{p}}{|x-y|^{n+sp}}\, dy
\\
&+\int_{\mathbb{R}^{n}\setminus B_{\delta}(x)}\frac{\left||F(x)-F(y)|^{p}-|F(x)+\theta\eta(x)-F(y)-\theta\eta(y)|^{p}\right|}{|x-y|^{n+sp}}\, dy ,
\end{split}\end{equation*}
where ${0<\delta<r-\rho}$ is a small parameter to be chosen later. By the algebraic inequality (see  \cite[Lemma 3.4]{KKL})
\begin{equation}\label{showman}
||a|^{p}-|b|^{p}|\leq C(p)|a-b|(|b|+|a-b|)^{p-1},\, p>0,\, a,\, b\in\R,
\end{equation}
the regularity of $F$ and the fact that $\eta\leq 1$, it is clear that
\begin{equation*}
\begin{split}
|D_s^{p}F(x)-&D_s^{p}(F+\theta\eta)(x)|\leq C\left({(1+\theta^p)}\int_{B_{\delta}(x)}\frac{|x-y|^{p}}{|x-y|^{n+sp}}\, dy\right.\\
&+\left.{\theta}\int_{\mathbb{R}^{n}\setminus B_{\delta}(x)}\frac{({4}\theta+|F(x)-F(y)|)^{p-1}}{|x-y|^{n+sp}}\, dy\right),
\end{split}
\end{equation*}
where $C=C(p,{\sup_{B_{\delta+\rho}(x_0)}\{|\nabla F|, |\nabla\eta|\}})$. 
Thus, since $\theta<1$ and $p>1$, it follows that
\begin{equation}\label{enem}
\begin{split}
|D_s^{p}F(x)-&D_s^{p}(F+\theta\eta)(x)|\leq C\delta^{p(1-s)}+C{\theta}\bigg(\theta^{p-1}\delta^{-sp}+\|F\|^{p-1}_{L^{\infty}(B_{\rho}(x_0))}\delta^{-sp}\\
&+\int_{\mathbb{R}^{n}\setminus B_{\delta}(x)}\frac{|F(y)|^{p-1}}{|x-y|^{n+sp}}\, dy\bigg)\\
&\leq C\left({\delta^{p(1-s)}}+{\theta}\delta^{-sp}\right),
\end{split}
\end{equation}
due to the fact that $F\in L^{p-1}_{sp}(\mathbb{R}^{n})$.
Choosing first $\delta=\delta(\eta)$ and then $\theta=\theta(\delta)$ appropriately small we have
$$|D_s^{p}F(x)-D_s^{p}(F+\theta\eta)(x)|\leq \varepsilon,$$
and the conclusion follows from \eqref{enem} by taking the supremum over $x\in B_{\rho}(x_0)$.
\end{proof}
\section{Viscosity solutions are weak solutions}
The aim of this section is to prove Theorem \ref{ViscToWeak}. 
\subsection{Infimal convolution}
Let $\varepsilon>0$. We define the infimal convolution of a function $u:\R^n\to\R$ as
\begin{equation}\label{infConv}
{u_\varepsilon(x):=
\inf_{y\in \R^n}\left(u(y)+\frac{|x-y|^q}{q\varepsilon^{q-1}}\right)},
\end{equation}
where 
$$q=2\;\; \mbox{ if }\;\; p>\frac{2}{2-s},\qquad\quad  q>\frac{sp}{p-1}\geq 2\;\;\mbox{ if }\;\; 1<p\leq \frac{2}{2-s}.$$
The inf (and sup) convolutions, that originally appeared in convex analysis, are very useful in the viscosity theory due to their nice approximation properties and because they are {\it nearly} $C^2$ functions. In the following lemma, whose proof can be found, for instance, in the appendix of \cite{JJ}, we summarize some fundamental properties of the infimal convolutions.
\begin{lemma}\label{propInf}Suppose $u$ is bounded and lower semicontinuous in $\R^n$. Then:
\begin{itemize}
\item[(i)] There exists $r(\varepsilon)>0$ such that
$${u_\varepsilon(x)=\inf_{y\in B_{r(\varepsilon)}(x)}\left(u(y)+\frac{|x-y|^q}{q\varepsilon^{q-1}}\right)},$$
where $r(\varepsilon)\to 0$ as $\varepsilon\to 0$.

\item[(ii)] The sequence $u_\varepsilon$ is increasing {as $\varepsilon\to 0$} and $u_\varepsilon\to u$ pointwise in {$\R^n$}.

\item[(iii)] $u_\varepsilon$ is locally Lipschitz continuous and twice differentiable a.e. Actually, for almost every $x,y\in \R^n$,
$$u_\varepsilon(y)=u_\varepsilon(x)+\nabla u_\varepsilon(x)\cdot (x-y)+\frac 12 D^2u_\varepsilon(x)(x-y)^2+o(|x-y|^2).$$

\item[(iv)] $u_\varepsilon$ is semiconcave, that is, there exists a constant $C>0$, that depends on $q$, $\varepsilon$ and the oscillation $\sup_{\R^n} u-\inf_{\R^n} u$, such that the function
$$x\mapsto u_\varepsilon(x)-C|x|^2,\qquad x\in \R^n,$$
is concave. In particular 
$$D^2u_\varepsilon(x)\leq 2CI\quad \mbox{ for a.e. }x\in \R^n.$$

\item[(v)] The set
$$Y_\varepsilon(x):=\left\{y\in B_{r(\varepsilon)}(x): \, u_\varepsilon(x)=u(y)+\frac{|x-y|^q}{q\varepsilon^{q-1}}\right\},$$
is non empty and closed for every $x\in \R^n$.

\item[(vi)] {If $\nabla u_\varepsilon(x)=0$ then $u_\varepsilon(x)=u(x)$.}
\end{itemize}
\end{lemma}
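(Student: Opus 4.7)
The plan is to treat the six properties essentially in order, leveraging two structural features of $u_\varepsilon$: first, that it is an infimum of a family of smooth and convex-in-$x$ penalty functions centered at points $y\in\R^n$; second, that the lower semicontinuity and boundedness of $u$ force any minimizing $y$ to stay in a small ball around $x$. Throughout, write $\Phi_y(x):=u(y)+\frac{|x-y|^q}{q\varepsilon^{q-1}}$, so that $u_\varepsilon(x)=\inf_y \Phi_y(x)$.

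First, for (i) note that $u_\varepsilon(x)\le \Phi_x(x)=u(x)\le \sup u$, while $\Phi_y(x)\ge \inf u + \frac{|x-y|^q}{q\varepsilon^{q-1}}$. Hence any near-infimum $y$ satisfies $|x-y|^q\le q\varepsilon^{q-1}\,\mathrm{osc}(u)$, giving $r(\varepsilon)\lesssim \varepsilon^{(q-1)/q}\to 0$. For (v), once the infimum is localized to the closed ball $\overline{B_{r(\varepsilon)}(x)}$, the map $y\mapsto \Phi_y(x)$ is lower semicontinuous (the first term by hypothesis, the second by continuity), so it attains its minimum; the set $Y_\varepsilon(x)$ is exactly the set where a l.s.c.\ function attains its minimum, hence closed. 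For (ii), monotonicity is immediate because the penalty coefficient $\varepsilon^{1-q}$ increases as $\varepsilon\downarrow 0$, so each $\Phi_y(x)$ increases and so does the infimum; while pointwise convergence follows from $u_\varepsilon(x)\le u(x)$ together with $\liminf u(y_\varepsilon)\ge u(x)$ by lower semicontinuity of $u$, using that $y_\varepsilon\to x$ by (i).

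The main analytic content is (iv), from which (iii) follows via Alexandrov's theorem. To prove semiconcavity, fix $x_1,x_2$ with midpoint $x_m$ and pick $y\in Y_\varepsilon(x_m)$. Using $u_\varepsilon(x_i)\le \Phi_y(x_i)$ and $u_\varepsilon(x_m)=\Phi_y(x_m)$, we obtain
$$u_\varepsilon(x_1)+u_\varepsilon(x_2)-2u_\varepsilon(x_m)\le \frac{1}{q\varepsilon^{q-1}}\bigl(|x_1-y|^q+|x_2-y|^q-2|x_m-y|^q\bigr).$$
The right-hand side is a second-order finite difference of $z\mapsto |z-y|^q/q$, which by Taylor expansion is bounded by $C(q)R^{q-2}|x_1-x_2|^2$ on any region $|x_i-y|\le R$. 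When $x_1,x_2$ vary in a bounded set, the relevant radius $R$ depends only on $\mathrm{osc}(u)$ and $\varepsilon$ (through (i)), giving the uniform semiconcavity bound, and hence $D^2u_\varepsilon\le 2CI$ in the distributional sense. Being semiconcave, $u_\varepsilon$ is locally Lipschitz, and Alexandrov's theorem delivers almost everywhere twice differentiability with the quadratic Taylor expansion in (iii). The case $q=2$ can alternatively be handled cleanly by rewriting $u_\varepsilon(x)-\frac{|x|^2}{2\varepsilon}$ as the infimum of affine functions of $x$; the genuine obstacle is the case $q>2$, where one has to check that $r(\varepsilon)$ is truly small enough for the second-difference estimate to produce a constant $C$ that is finite at each fixed $\varepsilon$.

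Finally, for (vi), pick any $y\in Y_\varepsilon(x)$ and consider the smooth function $h(z):=\Phi_y(z)$. By definition $h(x)=u_\varepsilon(x)$ and $h\ge u_\varepsilon$ everywhere, so $h-u_\varepsilon$ attains a minimum at $x$; at a point where $u_\varepsilon$ is differentiable, this forces $\nabla u_\varepsilon(x)=\nabla h(x)=\varepsilon^{1-q}|x-y|^{q-2}(x-y)$. If this vanishes then $x=y$, giving $u_\varepsilon(x)=\Phi_x(x)=u(x)$. The differentiability hypothesis is consistent with (iii)--(iv): at any point where $\nabla u_\varepsilon(x)$ exists (in particular where it is prescribed to be zero) the supporting argument applies.
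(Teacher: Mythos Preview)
The paper does not give its own proof of this lemma; it simply refers to the appendix of \cite{JJ}. Your sketch is the standard argument and is correct in all the essential points: the localization in (i), the l.s.c.\ minimization on a compact ball in (v), the monotonicity and pointwise convergence in (ii), the touching argument in (vi), and the derivation of (iii) from (iv) via Alexandrov's theorem are all fine.

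The only place that deserves a second look is (iv) in the case $q>2$. With $y\in Y_\varepsilon(x_m)$, $w=x_m-y$ and $h=x_1-x_m$, your midpoint estimate gives
\[
u_\varepsilon(x_1)+u_\varepsilon(x_2)-2u_\varepsilon(x_m)\le \frac{q-1}{\varepsilon^{q-1}}\bigl(r(\varepsilon)+|h|\bigr)^{q-2}|h|^2,
\]
and letting $|h|\to 0$ yields the uniform a.e.\ bound $D^2u_\varepsilon\le CI$ with $C=(q-1)\varepsilon^{1-q}r(\varepsilon)^{q-2}$, which is exactly what the paper uses afterwards. However, as written, restricting $x_1,x_2$ to a bounded set does not yet prove that $u_\varepsilon-C|x|^2$ is concave on all of $\R^n$, because for large $|h|$ the factor $(r(\varepsilon)+|h|)^{q-2}$ blows up. The fix is immediate: for $|h|\ge r(\varepsilon)$ use instead the trivial bound
\[
u_\varepsilon(x_1)+u_\varepsilon(x_2)-2u_\varepsilon(x_m)\le 2\,\mathrm{osc}(u)\le \frac{2\,\mathrm{osc}(u)}{r(\varepsilon)^2}\,|h|^2,
\]
and take $C$ to be the larger of the two constants (they are in fact of the same order, since $r(\varepsilon)^q=q\varepsilon^{q-1}\mathrm{osc}(u)$). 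With this small addition the proof is complete.
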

\begin{remark} \label{InfBound}
{Since $u_\varepsilon$ is increasing as $\varepsilon\to 0$, by definition 
$$-\|u\|_{L^\infty(\R^n)}\leq \inf_{y\in\R^n}u(y)\leq u_\varepsilon(x)\leq u(x),$$
for every $x\in\R^n$. Hence, if $u\in L^\infty(\R^n)$ then $\|u_\varepsilon\|_{L^\infty(\R^n)}$ is uniformly bounded.}
\end{remark}

The strategy to prove Theorem \ref{ViscToWeak} is the following: first finding out the problem satisfied in viscosity (and pointwise) sense by $u_\varepsilon$; then, taking advantage of the higher regularity and the specific properties of $u_\varepsilon$, passing from the pointwise to the weak formulation; finally passing to the limit to obtain the weak problem satisfied by $u$.

We start by identifying the problem fulfilled by $u_\varepsilon$.

\begin{lemma}\label{lemma1}
{Let $u:\R^n\rightarrow \R$ bounded and lower semicontinuous in {$\R^n$}, and let $f=f(x,t,\eta)$ be continuous in $\Omega\times\R\times\R^n$ and non increasing in $t$.} For any $1<p<+\infty$, if $u$ is a viscosity supersolution of
$$(-\Delta)^s_pu=f(x,u,D_s^pu) \quad \mbox{ in }\Omega,$$
then $u_\varepsilon$ is a viscosity supersolution of 
\begin{equation}\label{fin}
(-\Delta)^s_p v=f_\varepsilon(x,v,D_s^p v) \quad \mbox{ in }\Omega_{r(\varepsilon)},\end{equation}
where $u_{\varepsilon}$ was defined in \eqref{infConv}, $\Omega_{r(\varepsilon)}:=\{x\in\Omega:\mbox{dist}(x,\partial\Omega)>r(\varepsilon)\}$ and 
\begin{equation}\label{f_eps}
f_\varepsilon(x,t,\eta):=\inf_{y\in {B_{r(\varepsilon)}(x)}}f(y,t,\eta). 
\end{equation}
Furthermore, 
\begin{equation}\label{point}
(-\Delta)^s_pu_\varepsilon(x)\geq f_\varepsilon(x,u_\varepsilon(x),D_s^pu_\varepsilon(x)) \quad \mbox{ a.e. }  x\in\Omega_{r(\varepsilon)}.
\end{equation}
\end{lemma}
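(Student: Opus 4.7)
The plan is to apply the standard inf-convolution translation argument, which works because the operators $(-\Delta)^s_p$ and $D_s^p$ are translation invariant. Let $\psi\in C^2(B_r(x_0))\cap L^{p-1}_{sp}(\R^n)$ be an admissible test function touching $u_\varepsilon$ from below at $x_0\in\Omega_{r(\varepsilon)}$, i.e.\ $\psi(x_0)=u_\varepsilon(x_0)$, $\psi\leq u_\varepsilon$ in $\R^n$, satisfying (a) or (b) in Definition~\ref{defVisc}. By Lemma~\ref{propInf}(v), pick $y_0\in Y_\varepsilon(x_0)\subset B_{r(\varepsilon)}(x_0)$, so $u_\varepsilon(x_0)=u(y_0)+c$ with $c:=|x_0-y_0|^q/(q\varepsilon^{q-1})$. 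Define the translated test function
$$\tilde\psi(z):=\psi(z+x_0-y_0)-c.$$
Then $\tilde\psi(y_0)=u(y_0)$, and using $\psi\leq u_\varepsilon$ together with the defining inequality $u_\varepsilon(z+x_0-y_0)\leq u(z)+c$ yields $\tilde\psi\leq u$ on $\R^n$. Translation preserves $C^2$ and $L^{p-1}_{sp}$ regularity and transports conditions (a), (b) from $x_0$ to $y_0$, so $\tilde\psi$ is admissible for $u$ at $y_0$ (after possibly shrinking $r$ so that $B_r(y_0)\subset\Omega$, which is possible because $x_0\in\Omega_{r(\varepsilon)}$ forces $y_0\in\Omega$).

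Applying the viscosity supersolution hypothesis for $u$ at $y_0$ gives $(-\Delta)^s_p\tilde\psi(y_0)\geq f(y_0,u(y_0),D^p_s\tilde\psi(y_0))$, and the change of variable $w=z+x_0-y_0$ in both defining integrals shows
$$(-\Delta)^s_p\tilde\psi(y_0)=(-\Delta)^s_p\psi(x_0),\qquad D^p_s\tilde\psi(y_0)=D^p_s\psi(x_0).$$
Since $u(y_0)\leq u_\varepsilon(x_0)=\psi(x_0)$ and $f$ is non-increasing in $t$, the right-hand side is bounded below by $f(y_0,\psi(x_0),D^p_s\psi(x_0))$, which in turn is $\geq f_\varepsilon(x_0,\psi(x_0),D^p_s\psi(x_0))$ because $y_0\in B_{r(\varepsilon)}(x_0)$ and $f_\varepsilon$ is the infimum over such $y$. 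Chaining these produces the viscosity supersolution inequality for $u_\varepsilon$, and clauses (i), (ii), (iv) of Definition~\ref{defVisc} follow from Remark~\ref{InfBound} and Lemma~\ref{propInf}(ii)--(iii).

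For the pointwise inequality \eqref{point}, at a.e.\ $x_0\in\Omega_{r(\varepsilon)}$ the function $u_\varepsilon$ is twice differentiable by Lemma~\ref{propInf}(iii). I would construct a $C^2$ test function $\psi_\delta$ equal, near $x_0$, to the second-order Taylor polynomial of $u_\varepsilon$ at $x_0$ minus $\delta|x-x_0|^2$, and smoothly extended so as to lie below $u_\varepsilon$ globally (possible since $u_\varepsilon$ is bounded). Applying the viscosity inequality just proved to $\psi_\delta$ and letting $\delta\to 0$, dominated convergence on the defining integrals of $(-\Delta)^s_p\psi_\delta(x_0)$ and $D^p_s\psi_\delta(x_0)$ delivers \eqref{point}.

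The main technical obstacle is the singular regime $1<p\leq 2/(2-s)$, where condition (b) of Definition~\ref{defVisc} must be verified for the test functions involved. For the translation step this is automatic, since translations preserve both the isolated critical point property and the $C^2_\beta$ bound. For the pointwise step at critical points of $u_\varepsilon$, however, one has to construct $\psi_\delta$ so that it belongs to $C^2_\beta$ for some admissible $\beta>sp/(p-1)$; here the choice $q>sp/(p-1)$ in the definition of the inf-convolution is precisely what makes this possible, and Lemma~\ref{propInf}(vi), which says $u_\varepsilon(x)=u(x)$ whenever $\nabla u_\varepsilon(x)=0$, provides the bridge to transfer information between $u$ and $u_\varepsilon$ at such points.
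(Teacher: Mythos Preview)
Your argument for the first conclusion (that $u_\varepsilon$ is a viscosity supersolution of \eqref{fin}) is correct and is essentially the paper's: the paper phrases it as ``each translate $\phi_z(x)=u(z+x)+|z|^q/(q\varepsilon^{q-1})$ is a supersolution, and $u_\varepsilon=\inf_z\phi_z$'', but the verification boils down to the same translation of the test function you carry out.

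There is, however, a genuine gap in your pointwise argument \eqref{point}. The test function you propose is the perturbed second--order Taylor polynomial ``smoothly extended so as to lie below $u_\varepsilon$ globally'', and you let $\delta\to 0$. But the operators $(-\Delta)^s_p$ and $D^p_s$ are nonlocal: $(-\Delta)^s_p\psi_\delta(x_0)$ and $D^p_s\psi_\delta(x_0)$ see the values of $\psi_\delta$ on all of $\R^n$, and your smooth extension need not converge to $u_\varepsilon$ away from $x_0$. Dominated convergence therefore does \emph{not} produce $(-\Delta)^s_pu_\varepsilon(x_0)$ or $D^p_su_\varepsilon(x_0)$ in the limit. (Note also that, since $\psi_\delta\leq u_\varepsilon$ with equality at $x_0$, monotonicity of $L(\gamma)=|\gamma|^{p-2}\gamma$ gives $(-\Delta)^s_p\psi_\delta(x_0)\geq (-\Delta)^s_pu_\varepsilon(x_0)$, which is the wrong direction to conclude anything for $u_\varepsilon$.) The paper's remedy is to exploit that Definition~\ref{defVisc} only requires $\psi\in C^2(B_r(x_0))\cap L^{p-1}_{sp}(\R^n)$: it takes $\psi_r$ equal to the perturbed Taylor polynomial on $B_r(x_0)$ and equal to $u_\varepsilon$ on $\R^n\setminus B_r(x_0)$, and then lets $r\to 0$, so that the contribution outside the shrinking ball is exactly that of $u_\varepsilon$.

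A second gap appears in the singular regime $1<p\leq 2/(2-s)$ at points where $\nabla u_\varepsilon(x_0)=0$. Your plan is to make the perturbed Taylor polynomial belong to $C^2_\beta$ for some $\beta>sp/(p-1)\geq 2$, but a quadratic polynomial (plus $-\delta|x-x_0|^2$) has $\beta=2$ at best, so this is not admissible. The paper avoids this by testing in the problem for $u$ rather than for $u_\varepsilon$: by Lemma~\ref{propInf}(vi) one has $u(x_0)=u_\varepsilon(x_0)$, and the explicit function $\zeta(y)=u(x_0)-|x_0-y|^q/(q\varepsilon^{q-1})$ lies below $u$ (by the very definition of $u_\varepsilon$), touches at $x_0$, and belongs to $C^2_q(B_r(x_0))$ with $q>sp/(p-1)$. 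One then concludes via \cite[Lemma~3.7]{KKL} after glueing $\zeta$ to $u_\varepsilon$ outside $B_r(x_0)$ and sending $r\to 0$. You correctly identified that the choice of $q$ and Lemma~\ref{propInf}(vi) are the key ingredients, but the actual construction must go through $u$, not $u_\varepsilon$.
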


\begin{proof}
The first part follows as in \cite[Lemma 2.1]{MO}. Let us define, for every $z\in B_{r(\varepsilon)}(0)$,
$$\phi_z(x):=u(z+x)+\frac{|z|^q}{q\varepsilon^{q-1}},\quad x\in \R^n.$$
We claim that, for every $z\in B_{r(\varepsilon)}(0)$, this function is a viscosity supersolution of \eqref{fin}. Indeed let $x_0\in \Omega_{r(\varepsilon)}$ and $\varphi$ satisfying the hypotheses in Definition \ref{defVisc}, with $\varphi(x_0)=\phi_z(x_0)$, $\varphi\leq \phi_z$, and denote $y:=z+x$, $y_0=z+x_0\in B_{r(\varepsilon)}(x_0)\subseteq\Omega$. Thus the function
$$\tilde{\varphi}(y):=\varphi(y-z)-\frac{|z|^q}{q\varepsilon^{q-1}}$$
satisfies $\tilde{\varphi}(y_0)=u(y_0)$, $\tilde{\varphi}\leq u$, and therefore, since $u$ is a viscosity supersolution in $\Omega$ and $f$ is non increasing in the second variable,
\begin{equation*}\begin{split}
(-\Delta)^s_p\varphi(x_0)=(-\Delta)_p^s\tilde{\varphi}(y_0)&\geq f(y_0,\tilde{\varphi}(y_0),D_s^p\tilde{\varphi}(y_0))\\
&=f(z+x_0,\varphi(x_0)-\frac{|z|^q}{q\varepsilon^{q-1}},D_s^p\varphi(x_0))\\
&\geq f_\varepsilon(x_0),\varphi(x_0),D_s^p\varphi(x_0)),
\end{split}\end{equation*}
and the claim follows.

Let now $\psi$ be a function satisfying the regularity conditions in Definition \ref{defVisc} and such that $\psi(x_0)=u_\varepsilon(x_0)$, $\psi\leq u_\varepsilon$. By (i)  and (v) in Lemma \ref{propInf}, we can write
$$u_\varepsilon(x)=\inf_{y\in B_{r(\varepsilon)}(x)}\left(u(y)+\frac{|x-y|^q}{q\varepsilon^{q-1}}\right)=\inf_{z\in B_{r(\varepsilon)}(0)}\phi_z(x),\quad x\in\mathbb{R}^{n},$$
and we know that there exists $\hat{z}\in B_{r(\varepsilon)}(0)$ such that $u_\varepsilon(x_0)=\phi_{\hat{z}}(x_0)$. Since by definition $\psi\leq u_\varepsilon\leq \phi_{\hat{z}}$, $\psi$ may be used as a test function in the problem satisfied by $\phi_{\hat{z}}$ to conclude
$$(-\Delta)^s_p\psi(x_0)\geq f_\varepsilon(x_0,\psi(x_0),D_s^p\psi(x_0)).$$
Therefore $u_\varepsilon$ is a viscosity supersolution of \eqref{fin}.

Let us prove \eqref{point}. Fix $x\in\Omega_{r(\varepsilon)}$ such that $u_\varepsilon$ is twice differentiable at $x$ and $r>0$ such that $B_r(x)\subset\Omega_{r(\varepsilon)}$, and assume first that $p>\frac{2}{2-s}$ or $\nabla u_\varepsilon(x)\neq 0$.  Define
$$\psi_{\delta}(y):=u_\varepsilon(x)+\nabla u_\varepsilon(x)\cdot (x-y)+\frac 12 \left(D^2u_\varepsilon(x)-\delta I\right)(x-y)^2,$$
with $\delta>0$ and $I$ the identity matrix.
Notice that $\psi_\delta(x)=u_\varepsilon(x)$ and $\psi_\delta\in C^2(B_r(x))$. Consider now the function
$$\psi_r(y):=\begin{cases}
\psi_{\delta}(y),\quad y\in B_r(x),\\
u_\varepsilon(y),\quad y\in \R^n\setminus B_r(x),
\end{cases}$$
that, for $\delta$ large enough, satisfies $\psi_r\leq u_\varepsilon$. Thus, using $\psi_r$ as a test function in the viscosity sense in the problem satisfied by $u_\varepsilon$, by \cite[Lemma 3.6]{KKL} we obtain
\begin{equation}\begin{split}
\int_{\R^n\setminus B_r(x)}&\frac{|u_\varepsilon(x)-u_\varepsilon(y)|^{p-2}(u_\varepsilon(x)-u_\varepsilon(y))}{|x-y|^{n+sp}}dy\\
&= \int_{\R^n\setminus B_r(x)}\frac{|\psi_r(x)-\psi_r(y)|^{p-2}(\psi_r(x)-\psi_r(y))}{|x-y|^{n+sp}}dy\\
&\geq (-\Delta)_s^p\psi_r(x)-o_r(1)\\
&\geq f_\varepsilon(x,u_\varepsilon(x), D_s^p\psi_r(x))-o_r(1),\label{pointueps}
\end{split}\end{equation}
where $o_r(1)\to 0$ as $r\to 0$. Likewise 
\begin{equation*}\begin{split}
D_s^p\psi_r(x)&={\int_{\R^n}\frac{|\psi_r(x)-\psi_r(y)|^{p}}{|x-y|^{n+sp}}dy}\\
&={\int_{\R^n\setminus B_r(x)}\frac{|u_\varepsilon(x)-u_\varepsilon(y)|^{p}}{|x-y|^{n+sp}}dy+\int_{B_r(x)}\frac{|\psi_{\delta}(x)-\psi_{\delta}(y)|^{p}}{|x-y|^{n+sp}}dy}\\
&={\int_{\R^n\setminus B_r(x)}\frac{|u_\varepsilon(x)-u_\varepsilon(y)|^{p}}{|x-y|^{n+sp}}dy}+O(r^{p(1-s)}),
\end{split}\end{equation*}
and hence
$$\lim_{r\to 0}D_s^p\psi_r(x)= D_s^pu_\varepsilon(x).$$
Passing to the limit as $r\to 0$ in \eqref{pointueps}, taking into account the definition, via principal value, of $(-\Delta)^{s}_p$, we obtain \eqref{point} if $p>\frac{2}{2-s}$ or $\nabla u_\varepsilon(x)\neq 0$.

Consider now the case $1<p\leq \frac{2}{2-s}$ and $\nabla u_\varepsilon(x)=0$. Here, instead of testing in the problem satisfied by $u_\varepsilon$, we will need to use the fact that $u$ is a viscosity supersolution. By (vi) of Lemma \ref{propInf} we know that
$$u(x)=u_{\varepsilon}(x)\leq u(y)+\frac{|x-y|^q}{q\varepsilon^{q-1}}\quad\mbox{ for all }y\in\R^n,\quad q>\frac{sp}{p-1}\geq 2.$$
Thus, if we define
$$\zeta_r(y):=\begin{cases}
\zeta(y),\quad y\in B_r(x),\\
u_\varepsilon(y),\quad y\in\R^n\setminus B_r(x),
\end{cases}\quad \mbox{ with }\quad \zeta(y):=u(x)-\frac{|x-y|^q}{q\varepsilon^{q-1}},$$
since it can be checked that $\zeta_r\in C^2_q(B_r(x))$, and clearly $\zeta_r(x)=u(x)$, $\zeta_r\leq u$,  we can use it as test function in the problem satisfied by $u$ to obtain
\begin{equation}\label{chaturanga}
\begin{split}
(-\Delta)^s_p\zeta_r(x)\geq f(x,\zeta_r(x),D_s^p\zeta_r(x))\geq f_\varepsilon (x, u_\varepsilon(x),D_s^p\zeta_r(x)).
\end{split}\end{equation}
Noticing that, by \cite[Lemma 3.7]{KKL},
$$(-\Delta)^s_p\zeta_r(x)\leq\int_{\R^n\setminus B_r(x)}\frac{|u_\varepsilon(x)-u_\varepsilon(y)|^{p-2}(u_\varepsilon(x)-u_\varepsilon(y))}{|x-y|^{n+sp}}dy+o_r(1),$$
and
$$D_s^p\zeta_r(x)= \int_{\R^n\setminus B_r(x)}\frac{|u_\varepsilon(x)-u_\varepsilon(y)|^{p}}{|x-y|^{n+sp}}dy+o_r(1),$$
by passing to the limit in \eqref{chaturanga}, as $r\to 0$, we also obtain \eqref{point} when $x$ is a critical point of $u_{\varepsilon}$ and $1<p\leq \frac{2}{2-s}$.
\end{proof}

In the next two lemmas we pass from the pointwise sense of $(-\Delta)^s_p u_\varepsilon$ to a weak formulation. The infimal convolutions provide us useful information on the way we approach the function $u$, but they are not smooth enough (they are not $C^2$ nor $C^2_q$) to integrate by parts. Thus we need to regularize $u_\varepsilon$ via convolution with a standard mollifier, integrating by parts, and then passing to the limit. This last step is extremely tricky since we do not have a uniform estimate of $\|D^2u_\varepsilon\|_{L^\infty}$, meaning $D^2u_\varepsilon$ the Hessian matrix of $u_\varepsilon$. We only know $D^2u_\varepsilon\leq 2CI$ a.e. in $\Omega_{r(\varepsilon)}$ (see Lemma \ref{propInf}), but nothing about the size when the second derivative is negative. This obliges to control the sign of the quotients, a rather delicate issue due to the nonlocality and the nonlinearity of the operator. 

\begin{lemma}\label{lemma2}
Assume $p>\frac{2}{2-s}$. For all $\psi\in C_0^\infty(\Omega_{r(\varepsilon)})$, $\psi\gneq 0$,
$$\iint_{Q_{K}}\frac{|u_\varepsilon(x)-u_\varepsilon(y)|^{p-2}(u_\varepsilon(x)-u_\varepsilon(y))(\psi(x)-\psi(y))}{|x-y|^{n+sp}}dx\, dy\geq \int_{K} ((-\Delta)_p^su_\varepsilon)\psi dx,$$
where $u_{\varepsilon}$ was given in \eqref{infConv}, with $u\in L^{p-1}_{sp}(\R^n)\cap L^{\infty}_{loc}(\R^n)$ and $K:=\mbox{supp}(\psi)$.
\end{lemma}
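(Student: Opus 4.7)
The plan is to regularize $u_\varepsilon$ by mollification, use the integration-by-parts identity available for smooth functions, and pass to the limit via dominated convergence on the left-hand side and a Fatou-type argument on the right-hand side. Let $\rho_\delta$ be a standard mollifier and set $u_\varepsilon^\delta := u_\varepsilon \ast \rho_\delta$. By Lemma \ref{propInf} and Remark \ref{InfBound}, $u_\varepsilon^\delta \in C^\infty(\R^n)$ inherits the uniform $L^\infty$ bound, the local Lipschitz continuity, and the semiconcavity estimate $D^2 u_\varepsilon^\delta \leq 2CI$, all with constants independent of $\delta$; and since $p > 2/(2-s)$, \cite[Lemma 3.6]{KKL} ensures that $(-\Delta)_p^s u_\varepsilon^\delta(x)$ is defined pointwise.

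For the smooth approximation, a standard symmetrization via Fubini in the double integral yields
\begin{equation*}
\iint_{Q_K} \frac{|u_\varepsilon^\delta(x)-u_\varepsilon^\delta(y)|^{p-2}(u_\varepsilon^\delta(x)-u_\varepsilon^\delta(y))(\psi(x)-\psi(y))}{|x-y|^{n+sp}}\, dx\, dy = \int_K ((-\Delta)_p^s u_\varepsilon^\delta)\,\psi\, dx,
\end{equation*}
integrability of the integrand near the diagonal being guaranteed by the Lipschitz bound together with $p(1-s) > 0$. Passing to the limit on the LHS is then routine: the uniform Lipschitz and $L^\infty$ controls on $u_\varepsilon^\delta$, together with $\psi \in C_0^\infty(\Omega_{r(\varepsilon)})$, furnish a $\delta$-independent integrable majorant (of order $|x-y|^{p(1-s)-n}$ near the diagonal and rapidly decaying in the tails), so dominated convergence yields the LHS of the lemma statement.

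The main step is the passage to the limit on the RHS. Write the operator in symmetric form
\begin{equation*}
(-\Delta)_p^s u_\varepsilon^\delta(x) = \frac{1}{2}\int_{\R^n} \frac{L(a_\delta)+L(b_\delta)}{|x-y|^{n+sp}}\, dy,
\end{equation*}
with $L(\gamma)=|\gamma|^{p-2}\gamma$, $a_\delta := u_\varepsilon^\delta(x)-u_\varepsilon^\delta(y)$ and $b_\delta := u_\varepsilon^\delta(x)-u_\varepsilon^\delta(2x-y)$. By \eqref{ineq1} in Lemma \ref{lemmaL},
$$L(a_\delta)+L(b_\delta) = (p-1)(a_\delta+b_\delta)\int_0^1 |ta_\delta - (1-t)b_\delta|^{p-2}\, dt.$$
Semiconcavity of $u_\varepsilon$ (hence of $u_\varepsilon^\delta$, uniformly in $\delta$) gives globally the one-sided estimate $a_\delta+b_\delta = 2u_\varepsilon^\delta(x)-u_\varepsilon^\delta(y)-u_\varepsilon^\delta(2x-y) \geq -C_\varepsilon|x-y|^2$. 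Combined with the uniform $L^\infty$ and Lipschitz bounds on $u_\varepsilon^\delta$, and using that $p > 2/(2-s)$ keeps the worst-case estimate integrable near the diagonal, this produces a uniform lower bound $(-\Delta)_p^s u_\varepsilon^\delta(x) \geq -M_\varepsilon$ on $K$. Fatou's lemma then gives
$$\liminf_{\delta\to 0}\int_K ((-\Delta)_p^s u_\varepsilon^\delta)\,\psi\, dx \geq \int_K \liminf_{\delta\to 0}(-\Delta)_p^s u_\varepsilon^\delta(x)\,\psi(x)\, dx,$$
and a further inner Fatou argument, using the same semiconcavity-based lower bound on the integrand and the pointwise convergence $u_\varepsilon^\delta \to u_\varepsilon$, shows $\liminf_{\delta\to 0}(-\Delta)_p^s u_\varepsilon^\delta(x) \geq (-\Delta)_p^s u_\varepsilon(x)$ at every point of twice differentiability of $u_\varepsilon$ (a.e.\ $x$ by Lemma \ref{propInf}(iii)). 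Taking $\liminf_\delta$ in the smooth equality concludes the proof.

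The crux is the uniform-in-$\delta$ lower bound on $(-\Delta)_p^s u_\varepsilon^\delta$: only the upper semiconcavity bound on $D^2 u_\varepsilon^\delta$ is available, while the nonlinearity $L(\gamma)$ does not directly preserve this sign. The symmetrization transfers the problem to the symmetric second difference, which \emph{does} respect semiconcavity; nevertheless, in the subrange $1 < p < 2$ compatible with $p > 2/(2-s)$, the factor $|ta_\delta-(1-t)b_\delta|^{p-2}$ may blow up where $a_\delta \approx b_\delta$, forcing a careful splitting of the kernel between near and far regimes and the use of \eqref{ineq2} from Lemma \ref{lemmaL}.
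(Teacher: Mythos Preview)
Your argument is correct and tracks the paper's proof closely: mollify $u_\varepsilon$, use the integration-by-parts identity for the smooth approximant, pass to the limit on the left by dominated convergence and on the right by two nested applications of Fatou's lemma driven by a uniform-in-$\delta$ lower bound on $(-\Delta)_p^s u_\varepsilon^\delta$.

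The one substantive difference lies in how that lower bound is produced. The paper subtracts the first-order term $L(-\nabla u_{\varepsilon,\delta}(x)\cdot(y-x))$ (which integrates to zero by odd symmetry) and Taylor-expands, so that the controlling factor becomes $a-b=-D^2u_{\varepsilon,\delta}(\eta)(y-x)^2$ at an intermediate point $\eta$; the pointwise bound $D^2u_{\varepsilon,\delta}\leq CI$ from semiconcavity then drives the estimate. You instead use the \emph{even} symmetrization $y\mapsto 2x-y$, so the factor is the symmetric second difference $a_\delta+b_\delta=2u_\varepsilon^\delta(x)-u_\varepsilon^\delta(y)-u_\varepsilon^\delta(2x-y)$, for which semiconcavity directly yields the global inequality $a_\delta+b_\delta\geq -C_\varepsilon|x-y|^2$ without invoking a Taylor remainder or an intermediate point. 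Both routes then feed \eqref{ineq2} (with your substitution $b\rightsquigarrow -b_\delta$ in Lemma~\ref{lemmaL}) to get, in the range $\tfrac{2}{2-s}<p<2$, the integrable minorant $-C|x-y|^{2(p-1)-n-sp}$. Your version is a touch cleaner in that it never needs $\nabla u_\varepsilon^\delta$ or the pointwise Hessian; the paper's version, by localizing near the diagonal and keeping only a single far-field evaluation of $u_{\varepsilon,\delta}$, meshes better with its choice to mollify only inside $\Omega_{r(\varepsilon)}$ (see \eqref{uepsdelta}) and with the weaker tail hypothesis $u\in L^{p-1}_{sp}(\R^n)$ rather than a global $L^\infty$ bound.
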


\begin{proof}
We define
\begin{equation}\label{uepsdelta}
u_{\varepsilon,\delta}(x):=\begin{cases}
\displaystyle\int_{B_\delta(0)}\eta_\delta(y)u_{\varepsilon}(x-y)dy,\quad x\in \Omega_{r(\varepsilon)},\\
{u_\varepsilon(x)},\quad x\in \R^n\setminus\Omega_{r(\varepsilon)}.
\end{cases}
\end{equation}
with
$$\eta_\delta(y):=\delta^{-n}\eta\left(\frac{y}{\delta}\right),$$
where $\eta$ is the standard mollifier. We observe that $u_{\varepsilon,\delta}$ is a sequence of smooth semiconcave functions converging a.e. to $u_{\varepsilon}$. 
Since $u_{\varepsilon,\delta} \in C^2(\Omega_{r(\varepsilon)})\cap {L_{sp}^{p-1}(\R^n)}$ it can be seen that
\begin{equation*}\begin{split}
\iint_{Q_K}\frac{|u_{\varepsilon,\delta}(x)-u_{\varepsilon,\delta}(y)|^{p-2}(u_{\varepsilon,\delta}(x)-u_{\varepsilon,\delta}(y))(\psi(x)-\psi(y))}{|x-y|^{n+sp}}&dx\, dy\\
= \int_K & ((-\Delta)_p^su_{\varepsilon,\delta})\psi dx,
\end{split}\end{equation*}
for every $\psi\in C_0^\infty (\Omega_{r(\varepsilon)})$ with $K:=\mbox{supp}(\psi)$. The goal is to pass to the limit as $\delta\to 0$ to conclude the result.

\vspace{2pt}
\noindent STEP 1: Proving that
\begin{equation}\begin{split}\label{step1}
\lim_{\delta\to 0}&\iint_{Q_K}\frac{|u_{\varepsilon,\delta}(x)-u_{\varepsilon,\delta}(y)|^{p-2}(u_{\varepsilon,\delta}(x)-u_{\varepsilon,\delta}(y))(\psi(x)-\psi(y))}{|x-y|^{n+sp}}dx\, dy\\
& \qquad =\iint_{Q_K}\frac{|u_\varepsilon(x)-u_\varepsilon(y)|^{p-2}(u_\varepsilon(x)-u_\varepsilon(y))(\psi(x)-\psi(y))}{|x-y|^{n+sp}}dx\, dy.
\end{split}\end{equation}
Let us define
$$F_\delta(x,y):=\frac{|u_{\varepsilon,\delta}(x)-u_{\varepsilon,\delta}(y)|^{p-2}(u_{\varepsilon,\delta}(x)-u_{\varepsilon,\delta}(y))(\psi(x)-\psi(y))}{|x-y|^{n+sp}},\quad (x,y)\in Q_{K}.$$
Since by symmetry 
$$\iint_{Q_K} F_\delta(x,y)\, dx\,dy=\left(2\iint_{K\times(\R^{n}\setminus K)}+\iint_{K\times K}\right)F_\delta(x,y)\, dx\,dy,$$
to obtain \eqref{step1}, by the dominated convergence theorem, it is enough to prove that
\begin{equation}\label{DCTcond}
|F_\delta(x,y)|\leq F(x,y),
\end{equation}
where $F(x,y)$ is a function in $L^1(K\times\R^n)$, independent of $\delta$. {Let us define $$r:=\mbox{dist}(K,\partial\Omega_{r(\varepsilon)})>0\quad \mbox{ and }\quad K_{r/2}:=\{y\in\Omega_{r(\varepsilon)}:\,\mbox{dist}(y,K)\leq \frac{r}{2}\},$$ 
and consider first the case $x\in K$, $y\in K_{r/2}$}. By Young's inequality
\begin{equation*}\begin{split}
|F_{\delta}(x,y)|&\leq \frac{|u_{\varepsilon,\delta}(x)-u_{\varepsilon,\delta}(y)|^{p-1}|\psi(x)-\psi(y)|}{|x-y|^{n+sp}}\\
&\leq C\left(\frac{|u_{\varepsilon,\delta}(x)-u_{\varepsilon,\delta}(y)|^{p}}{|x-y|^{n+sp}}+\frac{|\psi(x)-\psi(y)|^p}{|x-y|^{n+sp}}\right),
\end{split}\end{equation*}
{with $C$ independent of $\delta$} so that, since $u_\varepsilon$ is locally Lipschitz, making $\delta$ small enough we have
\begin{equation*}\begin{split}
|u_{\varepsilon,\delta}(x)-u_{\varepsilon,\delta}(y)|^{p}&=\bigg|\int_{B_\delta(0)}(u_\varepsilon(x-z)-u_\varepsilon(y-z))\eta_\delta(z) dz\bigg|^{p}\\
&\leq C|x-y|^p,
\end{split}\end{equation*}
where $C>0$ depends on the Lipschitz constant of $u_\varepsilon$ in $K_{3r/4}$ but it is independent of $\delta$ since $\|\eta_{\delta}\|_{L^{1}(\R^n)}=1$. Thus we conclude that
$$|F_{\delta}(x,y)|\leq C|x-y|^{p-(n+sp)}\in L^1(K\times {K_{r/2}}),$$
where the right hand side is independent of $\delta$, so clearly \eqref{DCTcond} holds for every $(x,y)\in K\times {K_{r/2}}$.

Consider now the case $(x,y)\in K\times (\R^n\setminus {K_{r/2}})$. Notice that in this case the kernel is not singular anymore since
$$|x-y|\geq {\frac{r}{2}},$$
and thus we are only concerned with the integrability at infinity. Hence, using that $u_\varepsilon$ is locally  bounded (and thus $u_{\varepsilon,\delta}$),
\begin{equation}\label{nonumerada}
|F_\delta(x,y)|\leq C\frac{1+|u_\varepsilon(y)|^{p-1}}{|x-y|^{n+sp}},
\end{equation}
with $C$ independent of $\delta$, and the right hand side belongs to $L^1(K\times (\R^n\setminus{K_{r/2}}))$ due to the fact that {$u_\varepsilon\in L^{p-1}_{sp}(\R^n)$.} Therefore \eqref{DCTcond} and \eqref{step1} hold.

\vspace{2pt}
\noindent STEP 2: $\displaystyle \liminf_{\delta\to 0}\int_K((-\Delta)^s_pu_{\varepsilon,\delta})\psi\, dx\geq\int_K \liminf_{\delta\to 0}((-\Delta)^s_pu_{\varepsilon,\delta})\psi\, dx.$

\vspace{5pt}

We will see that $(-\Delta)^s_p u_{\varepsilon,\delta}(x)\geq -C$, $x\in K$, being $C$ a positive constant independent of $\delta$ and $x$, and then the claim will follow as a consequence of Fatou's lemma. 

Let $r>0$ small so that $r<\mbox{dist}{(K,\partial\Omega_{r(\varepsilon)})}$. Thus $B_r(x)\subset \Omega_{r(\varepsilon)}$ and for $y\in \R^n\setminus B_r(x)$ there holds
$$|x-y|>r.$$
As in STEP 1, since $u_\varepsilon$ is locally bounded,
\begin{equation}\begin{split}
&\bigg|\int_{\R^n\setminus B_r(x)}\frac{|u_{\varepsilon,\delta}(x)-u_{\varepsilon,\delta}(y)|^{p-2}(u_{\varepsilon,\delta}(x)-u_{\varepsilon,\delta}(y))}{|x-y|^{n+sp}}dy\bigg|\\
&\qquad\qquad \qquad\qquad \qquad\qquad\qquad \leq C\int_{\R^n\setminus B_r(x)}\frac{1+|u_\varepsilon(y)|^{p-1}}{|x-y|^{n+sp}}dy,\label{fuera}
\end{split}\end{equation}
which is uniformly bounded as a consequence of $u_\varepsilon\in L^{p-1}_{sp}(\R^n)$.

We now estimate the integral, in the principal value sense, in the ball $B_r(x)$. First, notice that by definition {and Lemma \ref{propInf}}
\begin{equation}\label{boundD2}
D^2u_{\varepsilon,\delta}\leq CI \mbox{ in }{\Omega_{r(\varepsilon)}},
\end{equation}
with $C>0$ independent of $\delta$.  Using the principal value and the odd symmetry it is clear that 
\begin{equation}\begin{split}\label{AB}
I_{B_r(x)}&:={\rm p.v} \int_{B_r(x)}\frac{|u_{\varepsilon,\delta}(x)-u_{\varepsilon,\delta}(y)|^{p-2}(u_{\varepsilon,\delta}(x)-u_{\varepsilon,\delta}(y))}{|x-y|^{n+sp}}dy\\
&=\int_{B_r(x)}\frac{L(u_{\varepsilon,\delta}(x)-u_{\varepsilon,\delta}(y))-L(-\nabla u_{\varepsilon,\delta}(x)\cdot (y-x))}{|x-y|^{n+sp}}dy,
\end{split}\end{equation}
with $L$ defined in \eqref{laL}. By Lemma \ref{lemmaL} with
\begin{equation}\label{aYb}
a:=u_{\varepsilon,\delta}(x)-u_{\varepsilon,\delta}(y),\quad b:=-\nabla u_{\varepsilon,\delta}(x)\cdot (y-x),
\end{equation}
expanding $u_{\varepsilon,\delta}(y)$ we obtain that
\begin{equation}\begin{split}
I_{B_r(x)}&=(p-1)\int_{B_r(x)}\frac{-D^2u_{\varepsilon,\delta}(\eta)(y-x)^2}{|x-y|^{n+sp}}\left(\int_0^1|ta-(1-t)b|^{p-2}dt\right)dy\\
&\geq (p-1)\int_{B_r(x)^{+}}\frac{-D^2u_{\varepsilon,\delta}(\eta)(y-x)^2}{|x-y|^{n+sp}}\left(\int_0^1|ta-(1-t)b|^{p-2}dt\right)dy,\label{B}
\end{split}\end{equation}
where $\eta\in B_r(x)\subset\Omega_{r(\varepsilon)}$ depends on $y$ and
$$B_r(x)^{+}:=B_r(x)\cap\{y: D^{2}u_{\varepsilon, \delta}(\eta)\geq 0\}.$$
If $p\geq 2$ by using that $u_\varepsilon$ is locally Lipschitz, also it is $u_{\varepsilon,\delta}$ and
$$\|\nabla u_{\varepsilon,\delta}\|_{L^\infty(K)}\leq C,$$
with $C$ independent of $\delta$. It follows that
\begin{equation}\label{p_grande0}\begin{split}
0\leq \int_0^1|ta-(1-t)b|^{p-2}dt\leq C|x-y|^{p-2}.
\end{split}\end{equation}
Thus, replacing in \eqref{B} and using \eqref{boundD2}, we get that
\begin{equation}\label{p_grande}
I_{B_r(x)}\geq -C\int_{B_r(x)}\frac{|x-y|^p}{|x-y|^{n+sp}}dy\geq -C,\quad p\geq 2,
\end{equation}
uniformly in $\delta$.

In the case $\frac{2}{2-s}<p<2$, applying again Lemma \ref{lemmaL}, for every $y\in B_r(x)^{+}$ we have that
\begin{equation}\label{p_chico0}\begin{split}
0&\leq \int_0^1|ta-(1-t)b|^{p-2}dt\leq C|a-b|^{p-2}\\
&=|-D^2u_{\varepsilon,\delta}(\eta)(y-x)^2|^{p-2}
=(D^{2}u_{\varepsilon,\delta}(\eta)(y-x)^2)^{p-2},\quad \eta\in B_r(x)\subset\Omega_{r(\varepsilon)}.
\end{split}\end{equation}
Replacing in \eqref{B} and using \eqref{boundD2} we obtain
\begin{equation}\label{p_chico}
I_{B_r(x)}\geq -C\int_{B_r(x)}\frac{|x-y|^{{2(p-1)}}}{|x-y|^{n+sp}}dy\geq -C,\quad \frac{2}{2-s}<p<2.
\end{equation}
By \eqref{p_grande} and \eqref{p_chico} we conclude that 
$$(-\Delta)^s_p u_{\varepsilon,\delta}\geq -C \mbox{ in }K,$$
{with $C>0$ independent of $\delta$} and so, applying Fatou's lemma, we obtain the claim in STEP 2.

\noindent STEP 3: $\displaystyle \int_K \liminf_{\delta\to 0}((-\Delta)^s_pu_{\varepsilon,\delta})\psi\, dx \geq  \int_K ((-\Delta)^s_pu_{\varepsilon})\psi\, dx.$

Reasoning as in Step 2,
\begin{equation*}\begin{split}
(-\Delta)^{s}_{p}u_{\varepsilon,\delta}(x)&={\int_{\R^n\setminus B_{r}(x)}}\frac{|u_{\varepsilon,\delta}(x)-u_{\varepsilon,\delta}(y)|^{p-2}(u_{\varepsilon,\delta}(x)-u_{\varepsilon,\delta}(y))}{|x-y|^{n+sp}}dy\\
&\quad+\int_{B_{r}(x)}\frac{L(u_{\varepsilon,\delta}(x)-u_{\varepsilon,\delta}(y))-L(-\nabla u_{\varepsilon,\delta}(x)\cdot (y-x))}{|x-y|^{n+sp}}dy,
\end{split}\end{equation*}
where $L$ was given in \eqref{laL}. As we saw in  
\eqref{fuera} one has 
\begin{equation}\label{go1}
\frac{|u_{\varepsilon,\delta}(x)-u_{\varepsilon,\delta}(y)|^{p-2}(u_{\varepsilon,\delta}(x)-u_{\varepsilon,\delta}(y))}{|x-y|^{n+sp}}\geq -C\frac{1+|u_\varepsilon(y)|^{p-1}}{|x-y|^{n+sp}}.
\end{equation}
{Using Lemma \ref{lemmaL} and \eqref{p_grande0} and \eqref{p_chico0}} we also obtain
\begin{equation}\begin{split}
&\frac{L(u_{\varepsilon,\delta}(x)-u_{\varepsilon,\delta}(y))-L(-\nabla u_{\varepsilon,\delta}(x)\cdot (y-x))}{|x-y|^{n+sp}}\geq -C \mathcal{F}(x,y),\label{go2}
\end{split}\end{equation}
with
$$\mathcal{F}(x,y):=\begin{cases}
|x-y|^{p-n-sp}\quad p\geq 2,\\
|x-y|^{2(p-1)-n-sp},\quad \frac{2}{2-s}<p<2.
\end{cases}$$
Therefore, since for every $x\in K$,
$$\frac{1+|u_\varepsilon(y)|^{p-1}}{|x-y|^{n+sp}}\in L^{1}({\R^n\setminus B_{r}(x)}),\mbox{ and } \mathcal{F}(x,y)\in L^{1}(B_r(x)),$$
by \eqref{go1} and \eqref{go2} using a generalization of Fatou's Lemma we conclude that
$$\liminf_{\delta\to 0}(-\Delta)^s_pu_{\varepsilon,\delta}\geq (-\Delta)^s_pu_{\varepsilon}.$$
This proves STEP 3 and the whole proof.
\end{proof}

In the case $1<p\leq \frac{2}{2-s}$ it is not enough to work with the regularized versions of $u_\varepsilon$. Even if they are $C^\infty$, the operator is not well defined for functions which are not in $C^2_q$, $q>\frac{sp}{p-1}$, and thus we cannot integrate by parts directly. We overcome this difficuly by regularizing the operator before the integration by parts.
\begin{lemma}\label{lemma22}
Assume $1<p\leq \frac{2}{2-s}$. For all $\psi\in C_0^\infty(\Omega_{r(\varepsilon)})$, $\psi\gneq 0$,
\begin{equation}\label{lemma33}
\iint_{Q_{K}}\frac{|u_\varepsilon(x)-u_\varepsilon(y)|^{p-2}(u_\varepsilon(x)-u_\varepsilon(y))(\psi(x)-\psi(y))}{|x-y|^{n+sp}}dx\, dy\geq \int_{K} ((-\Delta)_p^su_\varepsilon)\psi dx,
\end{equation}
where $u_{\varepsilon}$ was given in \eqref{infConv}, with $u\in L^{p-1}_{sp}(\R^n)\cap L^{\infty}_{loc}(\R^n)$ and $K:=\mbox{supp}(\psi)$.

\end{lemma}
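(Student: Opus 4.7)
Following the comment in the paragraph preceding the statement, the plan is to regularize the operator before integrating by parts. For $\tau>0$ I introduce the truncated operator
$$(-\Delta)^s_{p,\tau}u(x):=\int_{\R^n\setminus B_\tau(x)}\frac{|u(x)-u(y)|^{p-2}(u(x)-u(y))}{|x-y|^{n+sp}}\,dy,$$
which is well defined for every $u\in L^\infty_{loc}(\R^n)\cap L^{p-1}_{sp}(\R^n)$ because the kernel is no longer singular on the integration domain. The strategy is to establish, for the mollification $u_{\varepsilon,\delta}$ of \eqref{uepsdelta}, the truncated identity
$$\iint_{Q_K\cap\{|x-y|\geq\tau\}}\frac{|u_{\varepsilon,\delta}(x)-u_{\varepsilon,\delta}(y)|^{p-2}(u_{\varepsilon,\delta}(x)-u_{\varepsilon,\delta}(y))(\psi(x)-\psi(y))}{|x-y|^{n+sp}}\,dx\,dy=\int_K ((-\Delta)^s_{p,\tau}u_{\varepsilon,\delta})\psi\,dx,$$
which is now a direct consequence of Fubini, and then to remove the two regularizations in the order $\delta\to 0$, $\tau\to 0$.

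The limit $\delta\to 0$ at fixed $\tau>0$ follows the lines of STEP 1 in Lemma \ref{lemma2}, and is in fact simpler, since the truncation $|x-y|\geq\tau$ removes the only singular behaviour of the kernel. The dominating function is obtained by combining the uniform Lipschitz bound on $u_{\varepsilon,\delta}$ on a slight enlargement of $K$ (inherited from the locally Lipschitz character of $u_\varepsilon$) with the decay estimate \eqref{nonumerada} at infinity. Dominated convergence on both sides then yields the same truncated identity with $u_\varepsilon$ in place of $u_{\varepsilon,\delta}$.

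The decisive step is $\tau\to 0$. The left-hand side converges to $\iint_{Q_K}\cdots$ by dominated convergence: near the diagonal the integrand is bounded by $C|x-y|^{p-n-sp}$ (integrable since $p(1-s)>0$, using that $u_\varepsilon$ and $\psi$ are Lipschitz), and at infinity by $C(1+|u_\varepsilon(y)|^{p-1})/|x-y|^{n+sp}$, integrable by $u_\varepsilon\in L^{p-1}_{sp}(\R^n)$. On the right-hand side, pointwise convergence $(-\Delta)^s_{p,\tau}u_\varepsilon(x)\to(-\Delta)^s_p u_\varepsilon(x)$ holds at a.e.\ $x\in\Omega_{r(\varepsilon)}$ by the very definition of principal value and the a.e.\ existence of $(-\Delta)^s_p u_\varepsilon$ guaranteed by \eqref{point}. \emph{This is the main obstacle}: the naive bound from Lemma \ref{lemmaL} together with odd symmetry, which in Lemma \ref{lemma2} produced a uniform-in-$\delta$ lower bound, now yields an integrand of size $|x-y|^{2(p-1)-n-sp}$, non-integrable precisely when $1<p\leq 2/(2-s)$. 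To recover a uniform-in-$\tau$ integrable minorant for $(-\Delta)^s_{p,\tau}u_\varepsilon\cdot\psi$ on $K$ I would split $K$ according to whether $\nabla u_\varepsilon(x)\neq 0$ or $\nabla u_\varepsilon(x)=0$. On the non-critical part the refined expansion $L(a)-L(b)\approx(p-1)|b|^{p-2}(a-b)$ with $b=-\nabla u_\varepsilon(x)(y-x)\neq 0$ restores the integrable exponent $p-n-sp$ uniformly in $\tau$. On the critical part I would exploit property (vi) of Lemma \ref{propInf}, $u_\varepsilon(x)=u(x)$, together with the paraboloid $\zeta(y)=u(x)-|x-y|^q/(q\varepsilon^{q-1})$ (with $q>sp/(p-1)$), which is in $C^2_q$ and was precisely the comparison object used in the critical-point case of Lemma \ref{lemma1}, to control $(-\Delta)^s_{p,\tau}u_\varepsilon(x)$ uniformly from below. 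A Fatou-type argument then delivers $\liminf_{\tau\to 0}\int_K((-\Delta)^s_{p,\tau}u_\varepsilon)\psi\,dx\geq\int_K((-\Delta)^s_p u_\varepsilon)\psi\,dx$, which together with the convergence of the left-hand side establishes \eqref{lemma33}.
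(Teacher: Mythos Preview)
Your overall architecture---regularize the operator, integrate by parts via mollification, pass $\delta\to 0$, then remove the operator regularization by a Fatou argument---is exactly the paper's strategy (the paper uses the smoothed kernel $(|x-y|+\rho)^{-n-sp}$ instead of your hard cutoff $|x-y|\geq\tau$, but this is cosmetic). You also correctly identify the obstacle: the naive semiconcavity bound gives $|x-y|^{2(p-1)-n-sp}$, which is non-integrable exactly when $1<p\leq 2/(2-s)$.

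The gap is in your resolution of this obstacle. Splitting $K$ into $\{\nabla u_\varepsilon\neq 0\}$ and $\{\nabla u_\varepsilon=0\}$ does not produce an $L^1(K)$ minorant. Your ``refined expansion'' $L(a)-L(b)\approx(p-1)|b|^{p-2}(a-b)$ is only heuristic; once made rigorous (via Lemma \ref{lemmaL} together with the one-sided bound $a-b\geq -C|x-y|^2$ from semiconcavity), the resulting lower bound on $(-\Delta)^s_{p,\tau}u_\varepsilon(x)$ is of order $-C|\nabla u_\varepsilon(x)|^{p-2}$, which blows up as $x$ approaches the critical set (recall $p-2<0$) and is not obviously integrable over $K$. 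Your treatment of the critical set via the paraboloid $\zeta$ is correct, but it does not glue with the non-critical estimate. The paper's key observation is that the inf-convolution structure gives, at \emph{every} $x\in K$, a touching paraboloid $\varphi_{\hat{x}}(y)=u(\hat{x})+|y-\hat{x}|^q/(q\varepsilon^{q-1})$ with $\varphi_{\hat{x}}(x)=u_\varepsilon(x)$ and $\varphi_{\hat{x}}\geq u_\varepsilon$, hence $L(u_\varepsilon(x)-u_\varepsilon(y))\geq L(\varphi_{\hat{x}}(x)-\varphi_{\hat{x}}(y))$ by monotonicity of $L$. The lower bound is then reduced to an explicit $C^2_q$ function, and the case analysis is on $|x-\hat{x}|$ (bounded away from zero versus small), not on $|\nabla u_\varepsilon(x)|$; both cases give a constant independent of $x$, $\hat{x}$ and the regularization parameter. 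Your critical-point paraboloid $\zeta$ is precisely $\varphi_{\hat{x}}$ in the special case $\hat{x}=x$; the missing idea is that this comparison is available everywhere, not only at critical points.
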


\begin{proof}
We need to regularize the operator to integrate by parts. Indeed, given $\rho>0$ and $\psi\in C_0^\infty(\Omega_{r(\varepsilon)})$, $\psi\gneq 0$, with $K=\mbox{supp}(\psi)$, by using a mollification argument on $u_\varepsilon$ as in Lemma \ref{lemma2} and integrating by parts we can easily prove that
\begin{equation*}\begin{split}
\iint_{Q_{K}}&\frac{|u_\varepsilon(x)-u_\varepsilon(y)|^{p-2}(u_\varepsilon(x)-u_\varepsilon(y))(\psi(x)-\psi(y))}{(|x-y|+\rho)^{n+sp}}\, dx\, dy\\
&\quad\geq \int_{K} \left(\int_{\R^n}\frac{|u_\varepsilon(x)-u_\varepsilon(y)|^{p-2}(u_\varepsilon(x)-u_\varepsilon(y))}{(|x-y|+\rho)^{n+sp}}dy\right)\psi(x)\, dx.
\end{split}\end{equation*}
Using the boundedness, decay and Lipschitz continuity of $u_\varepsilon$ we can make $\rho\to 0$ on the left hand side to obtain the left hand side of \eqref{lemma33}. To analyze the other integral 
we denote 
$$F_\rho(x):=\int_{\R^n}\frac{|u_\varepsilon(x)-u_\varepsilon(y)|^{p-2}(u_\varepsilon(x)-u_\varepsilon(y))}{(|x-y|+\rho)^{n+sp}}dy{,\quad x\in K:=\mbox{supp}(\psi)}.$$
We aim to prove that there exists a positive function $G\in L^{1}(K)$, {independent of $\rho$,} such that $F_\rho(x)\psi(x)\geq -G(x)$, $x\in K$, so that 
\begin{equation}\label{objetivo1}
\liminf_{\rho\to 0}\int_K F_\rho(x)\psi(x)\, dx\geq \int_K\liminf_{\rho\to 0} F_\rho(x)\psi(x)\, dx,
\end{equation}
as a consequence of a generalization of  Fatou's lemma. Let us fix $x\in K$ and $\eta>0$ such that $B_\eta(x)\subset\Omega_{r(\varepsilon)}$. First notice that
\begin{equation}\label{boundExt}
\bigg|\int_{\R^n\setminus B_{\eta}(x)}\frac{|u_\varepsilon(x)-u_\varepsilon(y)|^{p-2}(u_\varepsilon(x)-u_\varepsilon(y))}{(|x-y|+\rho)^{n+sp}}dy\bigg|\leq C,
\end{equation}
independently of $\rho$ and $x$. {By (v) of Lemma \ref{propInf},} for every $x\in \Omega_{r(\varepsilon)}$ there exists $\hat{x}\in B_{r(\epsilon)}(x)$ such that
$$u_\varepsilon(x)=\varphi_{\hat{x}}(x),\quad \mbox{ with }\quad \varphi_z(y):=u(z)+\frac{|y-z|^q}{q\varepsilon^{q-1}},\;\;y, z\in \R^n.$$
Hence by definition, for $y\in \Omega_{r(\varepsilon)}$ we have
$$u_\varepsilon(x)-u_\varepsilon(y)=\varphi_{\hat{x}}(x)-\inf_{z\in \mathbb{R}^{n}}\varphi_z(y)\geq \varphi_{\hat{x}}(x)-\varphi_{\hat{x}}(y),$$
and, by Lemma \ref{lemmaL},
\begin{equation}\label{fase0}L(u_\varepsilon(x)-u_\varepsilon(y))- L(\varphi_{\hat{x}}(x)-\varphi_{\hat{x}}(y))\geq 0,
\end{equation}
{where $L$ was given in \eqref{laL}}. Thus adding and substracting $L(\varphi_{\hat{x}}(x)-\varphi_{\hat{x}}(y))$ we get
$$\int_{B_{\eta}(x)}\frac{L(u_\varepsilon(x)-u_\varepsilon(y))}{(|x-y|+\rho)^{n+sp}}dy\geq \int_{B_{\eta}(x)}\frac{L(\varphi_{\hat{x}}(x)-\varphi_{\hat{x}}(y))}{(|x-y|+\rho)^{n+sp}}dy.$$
{It is clear that} if we prove the existence of a positive constant $C$, independent of $x$, $\hat{x}$ and $\rho$, such that
\begin{equation}\label{boundVarphix}
\bigg|\int_{B_{\eta}(x)}\frac{L(\varphi_{\hat{x}}(x)-\varphi_{\hat{x}}(y))}{(|x-y|+\rho)^{n+sp}}dy\bigg|\leq C,
\end{equation}
this, together with \eqref{boundExt}, would imply 
$$F_\rho(x)\psi(x)\geq -C\psi(x),\quad 0\lneq\psi\in C^{\infty}_{0}(\Omega_{r(\varepsilon)})\subseteq L^{1}(K),$$
with $C$ independent of $\rho$ {as wanted}. 
To show \eqref{boundVarphix} we notice that, for every $y\in \Omega_{r(\varepsilon)}$, using that $q>2$, we have
\begin{equation}\label{prop}
\|\varphi_{\hat{x}}\|_{L^\infty(\Omega)}\leq C, \quad |\nabla \varphi_{\hat{x}}(y)|=\frac{|\hat{x}-y|^{q-1}}{\varepsilon^{q-1}}\leq C,
\end{equation}
and
$$-C I\leq -\frac{q-1}{\varepsilon^{q-1}}|\hat{x}-y|^{q-2}I\leq D^2\varphi_{\hat{x}}(y)\leq \frac{q-1}{\varepsilon^{q-1}}|\hat{x}-y|^{q-2}I\leq CI,$$
where $I$ is the identity matrix and $C$ is a constant independent of $\hat{x}$ and $y$. 

Fix $\eta_0>0$ small and suppose first that $x\notin B_{\eta_0}(\hat{x})$, proceeding as in the proof of \cite[Lemma 3.6]{KKL}, since $p<2$ by using \eqref{prop} we get
\begin{equation}\label{namaste}
\begin{split}
&\bigg|\int_{B_{\eta}(x)}\frac{L(\varphi_{\hat{x}}(x)-\varphi_{\hat{x}}(y))}{(|x-y|+\rho)^{n+sp}}dy\bigg|\\
&\qquad \leq c{\int_{B_{\eta}(x)}\frac{\left(|\nabla\varphi_{\hat{x}}(x)\cdot(y-x)|+\tau(y)|y-x|^2\right)^{p-2}\tau(y)|y-x|^2}{|x-y|^{n+sp}}dy}\\
&\qquad \leq c{\tau_\infty} \int_0^\eta\left(1+\frac{{\tau_\infty} r}{|\nabla \varphi_{\hat{x}}(x)|}\right)^{p-2}|\nabla \varphi_{\hat{x}}(x)|^{p-2}r^{p(1-s)}\frac{dr}{r}\\
&\qquad \leq c{\tau_\infty} \int_0^\eta|\nabla \varphi_{\hat{x}}(x)|^{p-2}r^{p(1-s)}\frac{dr}{r}\\
&\qquad {\leq c{\tau_\infty}\left( \frac{\eta_0^{q-1}}{\varepsilon^{q-1}}\right)^{p-2}\eta^{p(1-s)}\leq C,}
\end{split}\end{equation}
where 
\begin{equation}\label{eltau}
{\tau(y):=\sup_{|z-x|<|y-x|}|D^2 \varphi_{\hat{x}}(z)|},\qquad {\tau_\infty}:=\sup_{\Omega_{r(\varepsilon)}}|D^2\varphi_{\hat{x}}|,
\end{equation} 
with $\tau_\infty$ independent of $\hat{x}$. Therefore $C$ is independent of $x$, $\hat{x}$ and $\rho$. 

Assume now $x\in B_{\eta_0}(\hat{x})$ and proceed as in \cite[Lemma 3.7]{KKL}. 
Noticing that, since $q>2,$
\begin{equation*}\begin{split}
\sup_{y\in B_r(x)}|D^2\varphi_{\hat{x}}(y)|&\leq  \sup_{y\in B_r(x)}c|y-\hat{x}|^{q-2}\leq \sup_{y\in B_r(x)}c(|y-x|+|x-\hat{x}|)^{q-2}\\
&\leq c(r+|x-\hat{x}|)^{q-2},
\end{split}\end{equation*}
where $c=c(q,\varepsilon)$,  as in \eqref{namaste} we can estimate
\begin{equation}\begin{split}\label{compAbove}
&\bigg|\int_{B_{\eta}(x)}\frac{L(\varphi_{\hat{x}}(x)-\varphi_{\hat{x}}(y))}{(|x-y|+\rho)^{n+sp}}dy\bigg|\\
&\quad \leq C{\int_{B_{\eta}(x)}\frac{\left(|\nabla\varphi_{\hat{x}}(x)\cdot(y-x)|+\tau(y)|y-x|^2\right)^{p-2}\tau(y)|y-x|^2}{|x-y|^{n+sp}}dy}\\
&\quad \leq C\int_0^\eta \left(1+\frac{(r+|x-\hat{x}|)^{q-2}r}{|\nabla\varphi_{\hat{x}}(x)|}\right)^{p-2}(r+|x-\hat{x}|)^{q-2}|\nabla\varphi_{\hat{x}}(x)|^{p-2}r^{p(1-s)}\frac{dr}{r}\\
&\quad =C\bigg(\underbrace{\int_0^{|x-\hat{x}|}dr}_{I_1}+\underbrace{\int_{|x-\hat{x}|}^\eta dr}_{I_2}\bigg),
\end{split}\end{equation}
with $C$ independent of $x$, $\hat{x}$ and $\rho$. In the first case, using the precise expression of $|\nabla\varphi_{\hat{x}}(x)|$ {given in \eqref{prop}}, we get
\begin{equation*}\begin{split}
I_1&\leq C\int_0^{|x-\hat{x}|}|x-\hat{x}|^{q-2}|\nabla\varphi_{\hat{x}}(x)|^{p-2}r^{p(1-s)}\frac{dr}{r}\\
&=C|x-\hat{x}|^{q-2+(q-1)(p-2)}\int_0^{|x-\hat{x}|}r^{p(1-s)}\frac{dr}{r}\\
&=C|x-\hat{x}|^{q(p-1)-sp}\leq C{\eta_0}^{q(p-1)-sp },
\end{split}\end{equation*}
and in the second
\begin{equation*}\begin{split}
I_2&\leq C\int_{|x-\hat{x}|}^\eta\left(\frac{r^{q-1}}{|\nabla \varphi_{\hat{x}}(x)|}\right)^{p-2}r^{q-2}|\nabla \varphi_{\hat{x}}(x)|^{p-2}\frac{r^{p(1-s)}}{r}\\
&=C\int_{|x-\hat{x}|}^\eta r^{q(p-1)-sp}\frac{dr}{r}\leq C\eta^{q(p-1)-sp},
\end{split}\end{equation*}
{with $C$ independent of $x$, $\hat{x}$ and $\rho$.} Therefore, since $q>\frac{sp}{p-1}$,  \eqref{boundVarphix} follows, and so \eqref{objetivo1}. 
We finally want to prove that 
\begin{equation*}\begin{split}
{\liminf_{\rho\to 0} F_\rho(x)}&=\liminf_{\rho\to 0} \int_{\R^n}\frac{|u_\varepsilon(x)-u_\varepsilon(y)|^{p-2}(u_\varepsilon(x)-u_\varepsilon(y))}{(|x-y|+\rho)^{n+sp}}dy\\
&\quad \geq \int_{\R^n}\frac{|u_\varepsilon(x)-u_\varepsilon(y)|^{p-2}(u_\varepsilon(x)-u_\varepsilon(y))}{|x-y|^{n+sp}}dy,\quad  x\in K.
\end{split}\end{equation*}
Notice that, by symmetry,
$$F_\rho(x)=\int_{\R^n}\frac{L(u_\varepsilon(x)-u_\varepsilon(y))-L(-\nabla \varphi_{\hat{x}}(x)\cdot(y-x))\chi_{B_\eta(x)}(y)}{(|x-y|+\rho)^{n+sp}}dy,$$
and thus we will pass to the limit by proving that there exists $0<H\in L^{1}(\mathbb{R}^{n})$, independent of $\rho$, such that
\begin{equation}\label{Fatou2}
\frac{L(u_\varepsilon(x)-u_\varepsilon(y))-L(-\nabla\varphi_{\hat{x}}(x)\cdot(y-x))\chi_{B_\eta(x)}(y)}{(|x-y|+\rho)^{n+sp}}\geq -H(y),
\end{equation}
and using once again Fatou's lemma. 
{In fact, on one hand, if} $y\in \R^n\setminus B_\eta(x)$ we easily obtain that
$$\left|\frac{{L(u_\varepsilon(x)-u_\varepsilon(y))-L(-\nabla\varphi_{\hat{x}}(x)\cdot(y-x))\chi_{B_\eta(x)}(y)}}{(|x-y|+\rho)^{n+sp}}\right|\leq C\frac{1+|u_\varepsilon(y)|^{p-1}}{|x-y|^{n+sp}}=:H_1(y),$$
which is integrable in ${\R^n\setminus B_{\eta}(x)}$. On the other hand, for $y\in B_\eta (x)$, {by \eqref{fase0}}, we have
\begin{equation*}\begin{split}
&\frac{L(u_\varepsilon(x)-u_\varepsilon(y))-L(-\nabla \varphi_{\hat{x}}(x)\cdot(y-x))}{(|x-y|+\rho)^{n+sp}}\\
&\quad \quad \geq \frac{L(\varphi_{\hat{x}}(x)-\varphi_{\hat{x}}(y))-L(-\nabla \varphi_{\hat{x}}(x)\cdot(y-x))}{(|x-y|+\rho)^{n+sp}}\\
&\quad \quad \geq -C{\frac{\left(|\nabla\varphi_{\hat{x}}(x)\cdot(y-x)|+\tau(y)|y-x|^2\right)^{p-2}\tau(y)|y-x|^2}{|x-y|^{n+sp}}}=: -H_2(y),
\end{split}\end{equation*}
where $\tau(y)$ {was given in \eqref{eltau}}. Proceeding as in \eqref{compAbove} we deduce that $H_2\in L^1(B_\eta(x))$ and hence, defining
$$H(y):=\chi_{\R^n\setminus B_\eta(x)}(y)H_1(y)+\chi_{B_\eta(x)}(y)H_2(y)\in L^1(\R^n),$$
\eqref{Fatou2} holds and {the} result follows.
\end{proof}

Before moving to the proof of Theorem \ref{ViscToWeak} we need to establish a convergence result for the right hand side of the problem.
\begin{lemma}\label{lemma3}
Let $u\in{W^{s,p}(\Omega)}\cap {L_{loc}^\infty(\R^n})$ and $f=f(x,t,\eta)$ uniformly continuous in $\Omega\times\R\times \R^n$ and Lipschitz continuous in $\eta$ satisfying \eqref{growth}. Let $\psi\in C_0^\infty(\Omega),\; \psi \gneq 0$, with $K={\rm supp}(\psi)\subseteq\Omega$. If 
\begin{equation}\label{condD}
\lim_{\varepsilon\to 0}\int_K\int_{\R^n}\frac{|u_\varepsilon(x)-u_\varepsilon(y)-(u(x)-u(y))|^p}{|x-y|^{n+sp}}dx\, dy=0,
\end{equation}
then 
$$\lim_{\varepsilon\to 0}\int_{K} f_\varepsilon(x,u_\varepsilon,D_s^pu_\varepsilon)\psi \,dx=\int_{K} f(x,u,D_s^pu)\psi  \,dx,$$
where $u_{\varepsilon}$ and $f_{\varepsilon}$ are given in \eqref{infConv} and \eqref{f_eps} respectively.
\end{lemma}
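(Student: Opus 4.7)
The plan is to decompose the integrand into three pieces, each controlled by a different hypothesis on $f$. We write
$$\int_K [f_\varepsilon(x,u_\varepsilon,D_s^p u_\varepsilon) - f(x,u,D_s^p u)]\psi\,dx = I_1^\varepsilon + I_2^\varepsilon + I_3^\varepsilon,$$
where
$$I_1^\varepsilon := \int_K [f_\varepsilon(x,u_\varepsilon,D_s^p u_\varepsilon) - f(x,u_\varepsilon,D_s^p u_\varepsilon)]\psi\,dx,$$
$$I_2^\varepsilon := \int_K [f(x,u_\varepsilon, D_s^p u_\varepsilon) - f(x,u, D_s^p u_\varepsilon)]\psi\,dx,$$
$$I_3^\varepsilon := \int_K [f(x,u,D_s^p u_\varepsilon) - f(x,u,D_s^p u)]\psi\,dx.$$
Each term will exploit a different structural assumption on $f$, together with the appropriate convergence properties of $\{u_\varepsilon\}$.

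For $I_1^\varepsilon$, the definition \eqref{f_eps} and the uniform continuity of $f$ in $\Omega\times\R\times\R^n$ yield $|f_\varepsilon(x,t,\eta) - f(x,t,\eta)| \leq \omega(r(\varepsilon))$ uniformly in $(x,t,\eta)$, where $\omega$ is a modulus of continuity of $f$; since $r(\varepsilon)\to 0$ by Lemma \ref{propInf}(i), we get $I_1^\varepsilon\to 0$. For $I_2^\varepsilon$, uniform continuity in $t$ gives $|f(x,u_\varepsilon,\eta) - f(x,u,\eta)| \leq \omega(|u_\varepsilon-u|)$ uniformly in $\eta$; since $u_\varepsilon\to u$ pointwise a.e. in $K$ by Lemma \ref{propInf}(ii) and $\|u_\varepsilon\|_{L^\infty(\R^n)}$ is uniformly bounded by Remark \ref{InfBound}, the integrand is dominated by the $L^1(K)$ function $\omega(2\|u\|_{L^\infty})\psi$ and tends pointwise to zero, so dominated convergence yields $I_2^\varepsilon\to 0$.

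The main obstacle is $I_3^\varepsilon$, since we must convert the weighted $L^p$-type control provided by \eqref{condD} into an $L^1(K)$ control of the difference $D_s^p u_\varepsilon - D_s^p u$. The Lipschitz hypothesis of $f$ in $\eta$ gives
$$|I_3^\varepsilon| \leq L\|\psi\|_{L^\infty}\int_K |D_s^p u_\varepsilon - D_s^p u|\,dx.$$
Setting $A_\varepsilon(x):=(D_s^p u_\varepsilon(x))^{1/p}$ and $A(x):=(D_s^p u(x))^{1/p}$, Minkowski's inequality applied pointwise in $x\in K$ in the weighted space $L^p(\R^n,|x-y|^{-(n+sp)}dy)$ gives
$$|A_\varepsilon(x)-A(x)|^p \leq \int_{\R^n}\frac{|(u_\varepsilon(x)-u_\varepsilon(y))-(u(x)-u(y))|^p}{|x-y|^{n+sp}}\,dy,$$
so \eqref{condD} implies $\|A_\varepsilon - A\|_{L^p(K)}\to 0$. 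Combining the elementary bound $|a^p-b^p|\leq p(a+b)^{p-1}|a-b|$ for $a,b\geq 0$ with Hölder's inequality (exponents $p/(p-1)$ and $p$),
$$\int_K |D_s^p u_\varepsilon - D_s^p u|\,dx \leq p\,\|A_\varepsilon + A\|_{L^p(K)}^{p-1}\,\|A_\varepsilon - A\|_{L^p(K)}.$$
The factor $\|A_\varepsilon + A\|_{L^p(K)}$ remains bounded: $\|A\|_{L^p(K)}^p = \int_K D_s^p u\,dx<\infty$ by $u \in W^{s,p}(\Omega)$ and $K\subset\subset\Omega$ together with the tail control implicit in the standing hypotheses on $u$, while $\|A_\varepsilon\|_{L^p(K)}\leq \|A\|_{L^p(K)} + \|A_\varepsilon - A\|_{L^p(K)}$ by the triangle inequality. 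Hence $I_3^\varepsilon \to 0$, and the three estimates together complete the proof.
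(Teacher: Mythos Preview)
Your argument is correct and follows essentially the same three-step decomposition as the paper (uniform continuity for the $x$-dependence, dominated convergence for the $t$-dependence, Lipschitz hypothesis plus H\"older for the $\eta$-dependence). The only minor differences are that you swap the order of the $t$- and $\eta$-steps, handle the $t$-step via uniform continuity of $f$ rather than via the growth condition \eqref{growth}, and package the $\eta$-estimate through $A_\varepsilon=(D_s^p u_\varepsilon)^{1/p}$ and the reverse triangle inequality in weighted $L^p$ instead of applying the algebraic inequality \eqref{showman} directly to the double integral; both routes land on the same H\"older bound and the same use of \eqref{condD}.
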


\begin{proof}
Let $\varepsilon>0$ be a fixed but arbitrarily small constant. Proceeding as in \cite[Lemma 2.2]{MO} we consider $\psi\gneq 0$ a regular function with $K:={\rm supp}(\psi)$. Using the uniform continuity property of $f$, for every $\rho>0$ there exists $\delta>0$ such that
$$|f(x,u_\varepsilon,D_s^pu_\varepsilon)-f(y,u_\varepsilon,D_s^pu_\varepsilon)|\leq \rho,\quad y\in B_{\delta}(x).$$
Since by (i) of Lemma \ref{propInf} we can guarantee that $r(\varepsilon)<\delta$ by choosing $\varepsilon$ small enough,  taking the infimum in the previous expression we get that
\begin{equation}\label{lim1}
\int_{K}|f(x,u_\varepsilon,D_s^pu_\varepsilon)-f_\varepsilon(x,u_\varepsilon,D_s^pu_\varepsilon)|\psi \, dx\leq \rho\|\psi \|_{L^\infty(K)}|K|,
\end{equation}
with $\rho$ arbitrarily small. Let us see that 
\begin{equation}\label{lim2}
\lim_{\varepsilon\to 0}\int_K f(x,u_\varepsilon,D_s^pu_\varepsilon)\psi\, dx=\int_K f(x,u,D_s^p u)\psi .
\end{equation}
On one hand, since $\|u_\varepsilon\|_{L^\infty(K)}\leq C$ uniformly {(see Remark \ref{InfBound})}, it follows that
\begin{equation}\label{lagamma}
\max_{t\in(-\|u_\varepsilon\|_{L^\infty(\Omega)},\|u_\varepsilon\|_{L^\infty(\Omega)})}|\gamma(t)|\leq \max_{t\in(-\|u\|_{L^\infty(\Omega)},\|u\|_{L^\infty(\Omega)})}|\gamma(t)|.
\end{equation}
Then, due to the growth of $f$, we have that
$$|f(x,u_\varepsilon(x),D_s^p u(x))|\leq C|D_s^p u(x)|^{\frac{p-1}{p}}+\phi(x)\in L^1(K),$$
with $C$ independent of $\varepsilon$, and thus
\begin{equation}\label{D1}
\lim_{\varepsilon\to 0}\int_K f(x,u_\varepsilon,D_s^pu)\psi  dx=\int_K f(x,u,D_s^p u)\psi .
\end{equation}
Besides, since $f$ is Lipschitz continuous in the third variable,
\begin{equation}\begin{split}
&\int_K|f(x,u_\varepsilon,D_s^pu_\varepsilon)- f(x,u_\varepsilon,D_s^p u)|\psi \\
&\quad \leq C\int_K|D_s^pu_\varepsilon-D_s^pu| dx\\
&\quad {\leq C\int_K\int_{\R^n}\frac{||u_\varepsilon(x)-u_\varepsilon(y)|^{p}-|u(x)-u(y)|^{p}|}{|x-y|^{n+sp}}dy\, dx}\\
&\quad \leq C\left(\int_K\int_{\R^n}\frac{|g_{\varepsilon}(x,y)|^p}{|x-y|^{n+sp}}dy\, dx\right)^{1/p}\left(\int_K\int_{\R^n}\frac{|u(x)-u(y)|^{p}+|g_{\varepsilon}(x,y)|^{p}}{|x-y|^{n+sp}}dy\, dx\right)^{\frac{p-1}{p}},\label{D2}
\end{split}\end{equation}
where 
$$g_{\varepsilon}(x,y):=u_\varepsilon(x)-u_\varepsilon(y)-(u(x)-u(y)),\, x\in K,\, y\in\R^{N},$$
and in the last step we have applied \eqref{showman}
and the H\"older's inequality. Hence, since {$u\in W^{s,p}(\Omega)$}, putting together \eqref{D1}, \eqref{D2}, and using the hypothesis \eqref{condD}, we conclude
\begin{equation*}\begin{split}
&\lim_{\varepsilon\to 0}\int_K|f(x,u_\varepsilon,D_s^pu_\varepsilon)-f(x,u,D_s^p u_\varepsilon)|\psi  dx\\
&\quad \leq \lim_{\varepsilon\to 0}\int_K|f(x,u_\varepsilon,D_s^pu_\varepsilon)-f(x,u_\varepsilon,D_s^p u)|\psi  dx\\
&\quad\quad +\lim_{\varepsilon\to 0}\int_K|f(x,u_\varepsilon,D_s^pu)- f(x,u,D_s^p u)|\psi dx\\
&\quad =0,
\end{split}\end{equation*}
{proving \eqref{lim2}. The result follows as a consequence of \eqref{lim1} and \eqref{lim2}.}
\end{proof}

\subsection{Proof of Theorem \ref{ViscToWeak}}
Let $u_\varepsilon$ given by \eqref{infConv}. By Lemma \ref{lemma1} it is a viscosity supersolution of
$$(-\Delta)^s_pu_\varepsilon=f_\varepsilon(x,u_\varepsilon,D_s^pu_\varepsilon) \quad \mbox{ in }\Omega_{r(\varepsilon)},$$
and
$$(-\Delta)_s^p u_\varepsilon(x)\geq f_\varepsilon(x,u_\varepsilon(x),D_s^pu_\varepsilon(x)) \quad\mbox{ a.e. in }\Omega_{r(\varepsilon)}.$$
Thus, by Lemma \ref{lemma2} and Lemma \ref{lemma22}, for all $1<p<\infty$ and every $\psi\in C_0^\infty(\Omega_{r(\varepsilon)})$, $\psi\gneq 0$, we have
\begin{equation}\label{ineqEps}
\begin{split}
\iint_{Q_{\Omega_{r(\varepsilon)}}}&\frac{|u_\varepsilon(x)-u_\varepsilon(y)|^{p-2}(u_\varepsilon(x)-u_\varepsilon(y))(\psi(x)-\psi(y))}{|x-y|^{n+sp}}dx\, dy\\
&\geq \int_{\Omega_{r(\varepsilon)}} f_\varepsilon(x,u_\varepsilon,D_s^pu_\varepsilon)\psi dx,
\end{split}
\end{equation}
that is, $u_{\varepsilon}$ is a weak supersolution in $\Omega_{r(\varepsilon)}$. Let $\varPhi\in C_0^\infty(\Omega)$, $\varPhi\geq 0$ be a fixed but arbitrarily smooth function with $K:={\rm supp\, }\varPhi$. By taking $\varepsilon$ small enough we can assume, without loss of generality, that $K\subset\Omega_{r(\varepsilon)}$. Our objective is passing to the limit in $\varepsilon$ in {\eqref{ineqEps}} to prove that
\begin{equation}\label{objetivo}
\iint_{Q_K}\frac{|u(x)-u(y)|^{p-2}(u(x)-u(y))(\varPhi(x)-\varPhi(y))}{|x-y|^{n+sp}}dx\, dy\geq \int_K f(x,u,D_s^pu)\varPhi\, dx,
\end{equation}
 by using Lemma \ref{lemma3} and some integral estimates. Let us first consider a non negative smooth function $\xi$, $0\leq\xi\leq 1$, with support in a compact domain $K''\subset \Omega_{r(\varepsilon)}$ and such that $\xi=1$ in a compact $K'\subset K''$ with $K\subset K'\subset K''$. Thus, by Proposition \ref{Caccioppoli},
\begin{equation*}\begin{split}
&\int_{K'}\int_{\R^n}\frac{|u_\varepsilon(x)-u_\varepsilon(y)|^p}{|x-y|^{n+sp}}dy\, dx=\int_{K'}\int_{\R^n}\frac{|u_\varepsilon(x)-u_\varepsilon(y)|^p}{|x-y|^{n+sp}}\xi^p(x)dy\, dx\\
&\leq C\left[(\mbox{osc}(u_\varepsilon))^p\left(\int_{K''}\int_{\R^n}\frac{|\xi(x)-\xi(y)|^p}{|x-y|^{n+sp}}dy\, dx+{\gamma_{\infty,u_\varepsilon}^p}\right)+\mbox{osc}(u_\varepsilon)\right],
\end{split}\end{equation*}
with $C$ independent of $\varepsilon$ and $\gamma_{\infty,u_\varepsilon}:=\max_{t\in(-\|u_\varepsilon\|_{L^\infty(\Omega)},\|u_\varepsilon\|_{L^\infty(\Omega)})}|\gamma(t)|$. Notice that, since $u_\varepsilon$ is increasing {as $\varepsilon\to 0$},
$$\mbox{osc}(u_\varepsilon)\leq \sup_{\R^n} u-\inf_{\R^n} u_{\varepsilon_0},\quad \varepsilon<\varepsilon_0,$$
and $\|u_\varepsilon\|_{L^\infty(\Omega)}\leq \|u\|_{L^\infty(\Omega)}$. Thus by \eqref{lagamma} we get that
\begin{equation}\label{covid}
\int_{K'}\int_{\R^n}\frac{|u_\varepsilon(x)-u_\varepsilon(y)|^p}{|x-y|^{n+sp}}dy\, dx\leq C,
\end{equation}
with $C$ independent of $\varepsilon$ or, equivalently,
\begin{equation}\label{Lpunif}
\left\|\frac{u_\varepsilon(x)-u_\varepsilon(y)}{|x-y|^{\frac{n+sp}{p}}}\right\|_{L^p(K'\times \R^n)}\leq C.
\end{equation}
Since we already knew that $u_\varepsilon\to u$ a.e. as $\varepsilon\to 0$ we deduce that
\begin{equation}\label{Lpconv}
u_\varepsilon\to u \mbox{ in } L^q(K'), \mbox{ for all } q\in [1, p^*_s),
\end{equation}
$$\frac{u_\varepsilon(x)-u_\varepsilon(y)}{|x-y|^{\frac{n+sp}{p}}}\rightharpoonup \frac{u(x)-u(y)}{|x-y|^{\frac{n+sp}{p}}}\mbox{ in }L^p(K'\times \R^n),$$
where $p_s^*:=\displaystyle\frac{np}{n-ps}$, $n>ps$, is the critical Sobolev exponent. Then
\begin{equation}\label{weakConv}
\int_{K'}\int_{\R^n}\frac{u_\varepsilon(x)-u_\varepsilon(y)}{|x-y|^{\frac{n+sp}{p}}}\varphi(x,y) dy\, dx\rightarrow \int_{K'}\int_{\R^n}\frac{u(x)-u(y)}{|x-y|^{\frac{n+sp}{p}}}\varphi(x,y) dy\, dx,
\end{equation}
for every $\varphi\in L^{p/(p-1)}(K'\times\R^n)$. We want to point out that, since \eqref{covid} also implies 
$$\left\|\frac{|u_\varepsilon(x)-u_\varepsilon(y)|^{p-2}(u_\varepsilon(x)-u_\varepsilon(y))}{|x-y|^{\frac{(n+sp)(p-1)}{p}}}\right\|_{L^\frac{p}{p-1}(K'\times \R^n)}\leq C,$$
we also have
\begin{equation}\label{weakConv2}
\frac{|u_\varepsilon(x)-u_\varepsilon(y)|^{p-2}(u_\varepsilon(x)-u_\varepsilon(y))}{|x-y|^{(n+sp)\frac{p-1}{p}}}\rightharpoonup \frac{|u(x)-u(y)|^{p-2}(u(x)-u(y))}{|x-y|^{(n+sp)\frac{p-1}{p}}},
\end{equation}
in $L^{p/(p-1)}(K'\times \R^n)$. Consider now the function
$$\psi(x):=(u(x)-u_\varepsilon(x))\theta(x),$$
with $\theta$ a non negative smooth function with compact support in $K'\subset\Omega$, such that $\theta\equiv 1$ in $K\subset K'$. 
Let us denote
\begin{equation*}\begin{split}
U(x,y)&:=|u(x)-u(y)|^{p-2}(u(x)-u(y)),\\
U_\varepsilon(x,y)&:= |u_\varepsilon(x)-u_\varepsilon(y)|^{p-2}(u_\varepsilon(x)-u_\varepsilon(y)),
\end{split}\end{equation*}
and notice that
$$\psi(x)-\psi(y)=\theta(x)(u(x)-u_\varepsilon(x)-(u(y)-u_\varepsilon(y))+(\theta(x)-\theta(y))(u(y)-u_\varepsilon(y)).$$
Thus, replacing in \eqref{ineqEps} we can write
\begin{equation*}\begin{split}
\int_{K'}f_\varepsilon(x,u_\varepsilon&,D_s^pu_\varepsilon)\psi\, dx\leq \iint_{Q_{K'}}\frac{U_\varepsilon(x,y)(\psi(x)-\psi(y))}{|x-y|^{n+sp}}\, dy\, dx\\
=&\underbrace{\iint_{Q_{K'}}\frac{U_\varepsilon(x,y)(\theta(x)-\theta(y))(u(y)-u_\varepsilon(y))}{|x-y|^{n+sp}}\, dy\, dx}_{I_{1}}\\
&+\underbrace{\int_{K'}\int_{\R^n}\frac{U(x,y)\theta(x)(u(x)-u(y)-(u_\varepsilon(x)-u_\varepsilon(y))}{|x-y|^{n+sp}}\, dy\, dx}_{I_{2}}\\
&-\underbrace{\int_{K'}\int_{\R^n}\frac{(U(x,y)-U_\varepsilon(x,y))\theta(x)(u(x)-u(y)-(u_\varepsilon(x)-u_\varepsilon(y))}{|x-y|^{n+sp}}\, dy\, dx}_{I_{3}}.
\end{split}\end{equation*}
By H\"older inequality and the uniform bound \eqref{Lpunif} it follows that
\begin{equation*}\begin{split}
I_{1}&\leq \left(\iint_{Q_{K'}}\frac{|u_\varepsilon(x)-u_\varepsilon(y)|^p}{|x-y|^{n+sp}}\, dy\, dx\right)^{\frac{p-1}{p}}\left(\iint_{Q_{K'}}\frac{|\theta(x)-\theta(y)|^p}{|x-y|^{n+sp}}|u(y)-u_\varepsilon(y)|^p\, dy\, dx\right)^{\frac{1}{p}}\\
&\leq C\left(\iint_{Q_{K'}}\frac{|\theta(x)-\theta(y)|^p}{|x-y|^{n+sp}}|u(y)-u_\varepsilon(y)|^p\, dy\, dx\right)^{\frac{1}{p}}.
\end{split}\end{equation*}
Since by the boundedness and smoothness of $\theta$ and Remark \ref{InfBound} we have
$$\frac{|\theta(x)-\theta(y)|^p}{|x-y|^{n+sp}}|u(y)-u_\varepsilon(y)|^p
\leq C(p)\|u\|_{L^{\infty}(\R^n)}\frac{|\theta(x)-\theta(y)|^p}{|x-y|^{n+sp}}\in L^{1}(Q_{K'}),$$
by the Dominated Convergence Theorem we get that $I_{1}\to 0$ as $\varepsilon\to 0$. 
Noticing that 
$$\frac{U(x,y)\theta(x)}{|x-y|^{(n+sp)\frac{p-1}{p}}}=\frac{|u(x)-u(y)|^{p-2}(u(x)-u(y))\theta(x)}{|x-y|^{(n+sp)\frac{p-1}{p}}}\in L^{\frac{p}{p-1}}(K'\times\R^n),$$
then $I_{2}\to 0$ as $\varepsilon\to 0$ as a consequence of \eqref{weakConv}. 

Let us analyze $I_3$. By performing some algebraic computations it can be seen that
\begin{equation*}\begin{split}
&(U(x,y)-U_\varepsilon(x,y))(u(x)-u(y)-(u_\varepsilon(x)-u_\varepsilon(y))\\
&\quad\geq(|u(x)-u(y)|-|u_\varepsilon(x)-u_\varepsilon(y)|)(|u(x)-u(y)|^{p-1}-|u_\varepsilon(x)-u_\varepsilon(y)|^{p-1})\geq 0,
\end{split}\end{equation*}
and hence we obtain
$$0\leq \lim_{\varepsilon\to 0}I_3\leq \limsup_{\varepsilon\to 0}\left(-\int_{K'} f_\varepsilon(x,u_\varepsilon,D_s^pu_\varepsilon)\psi dx\right).$$
By \eqref{growth} and \eqref{lagamma} we obtain
\begin{equation}\label{estf}
-\int_{K'} f_\varepsilon(x,u_\varepsilon,D_s^pu_\varepsilon)\psi\leq \gamma_\infty \int_{K'} |D_s^pu_\varepsilon|^{\frac{p-1}{p}}(u-u_\varepsilon)\theta dx+\|\phi\|_{L^\infty(K')}\int_{K'}(u-u_\varepsilon)\theta dx,
\end{equation}
where $\gamma_\infty:=\sup_{x\in K'} {|\gamma (u(x))|}$. Since
\begin{equation*}\begin{split}
\int_{K'} |D_s^pu_\varepsilon|^{\frac{p-1}{p}}(u-u_\varepsilon)\theta dx &\leq \left(\int_{K'}|D_s^pu_\varepsilon| dx\right)^{\frac{p-1}{p}}\left(\int_{K'} |u-u_\varepsilon|^p\right)^{\frac{1}{p}}\\
&\leq \left(\int_{K'}\int_{\R^n}\frac{|u_\varepsilon(x)-u_\varepsilon(y)|^p}{|x-y|^{n+sp}}\, dy\, dx\right)^{\frac{p-1}{p}}\left(\int_{K'} |u-u_\varepsilon|^p\right)^{\frac{1}{p}},
\end{split}\end{equation*}
the right hand side of \eqref{estf} goes to 0 as $\varepsilon\to 0$ as a consequence of \eqref{Lpunif} and \eqref{Lpconv}. Thus
\begin{equation}\label{I3}
\lim_{\varepsilon\to 0}I_3=0.
\end{equation}
This allows us to have the hypothesis of Lemma \ref{lemma3} satisfied for every $1<p<\infty$. Indeed if $p\geq 2$, applying the algebraic inequality
$$2^{p-1}|a-b|^p\leq ||a|^{p-2}a-|b|^{p-2}b|(a-b),\, a,\, b\in\R,$$
we have 
$$0\leq \int_{K}\int_{\R^n}\frac{|u(x)-u(y)-(u_\varepsilon(x)-u_\varepsilon(y))|^p}{|x-y|^{n+sp}}\, dy\, dx\leq I_3,$$
and hence by \eqref{I3}
\begin{equation}\label{strong}
\lim_{\varepsilon\to 0}\int_{K}\int_{\R^n}\frac{|u(x)-u(y)-(u_\varepsilon(x)-u_\varepsilon(y))|^p}{|x-y|^{n+sp}}, dy\, dx=0.
\end{equation}
If $1<p<2$ we use the vector inequality 
$$\frac{|a-b|^2}{(|a|+|b|)^{2-p}}\leq C(|a|^{p-2}a-|b|^{p-2}b)\cdot (a-b),\quad C=C(n,p), \;a,b\in\R^n,$$
to get
\begin{equation*}\begin{split}
\int_{K}\int_{\R^n}&\frac{|u(x)-u(y)-(u_\varepsilon(x)-u_\varepsilon(y))|^p}{|x-y|^{n+sp}}\, dy\, dx\\
&\leq \left(\int_{K}\int_{\R^n}\frac{|u(x)-u(y)-(u_\varepsilon(x)-u_\varepsilon(y))|^2}{|x-y|^{n+sp}(|u(x)-u(y)|+|u_\varepsilon(x)-u_\varepsilon(y)|)^{2-p}}\, dy\, dx\right)^{p/2}\\
&\quad \cdot \left(\int_{K}\int_{\R^n}\frac{(|u(x)-u(y)|+|u_\varepsilon(x)-u_\varepsilon(y)|)^{p}}{|x-y|^{n+sp}}\, dy\, dx\right)^{\frac{2-p}{2}}\\
&\leq C\left(\int_{K}\int_{\R^n}\frac{|u(x)-u(y)-(u_\varepsilon(x)-u_\varepsilon(y))|^2}{|x-y|^{n+sp}(|u(x)-u(y)|+|u_\varepsilon(x)-u_\varepsilon(y)|)^{2-p}}\, dy\, dx\right)^{p/2}\\
&\leq C\left(\int_{K}\int_{\R^n}\frac{(U(x,y)-U_\varepsilon(x,y))(u(x)-u(y)-(u_\varepsilon(x)-u_\varepsilon(y))}{|x-y|^{n+sp}}\, dy\, dx\right),
\end{split}\end{equation*}
where in the first inequality we have applied H\"older's inequality with $(2/p, 2/(2-p))$ and \eqref{covid} in the second one. Then \eqref{strong} also holds for $1<p<2$ as a consequence of \eqref{I3}.

Thus we can apply Lemma \ref{lemma3} to pass to the limit in the right hand side of \eqref{ineqEps} and, by using \eqref{weakConv2} with
$$\varphi(x,y):=\frac{\varPhi(x)-\varPhi(y)}{|x-y|^{\frac{n+sp}{p}}}\in L^{p}(K\times\R^n),$$
we conclude \eqref{objetivo},
what completes the proof of Theorem \ref{ViscToWeak}.

\section{Weak solutions are viscosity solutions}
As mentioned in the Introduction, to prove Theorem \ref{WeakToVisc} we need to use a strategy different from \cite[Theorem 1.5]{MO}. The nonlocal behavior of the problem (in particular the fact that $D_s^p$ is given by an integral defined in the whole $\R^n$), creates difficulties that cannot be solved following the local approach. In particular, the continuity of the operator $D_s^p$ under regular perturbations is needed.

\subsection{Proof of Theorem \ref{WeakToVisc}} 

Let $u$ be a weak supersolution of \eqref{problem}, continuous in $\Omega$. We proceed by contradiction, that is, we suppose that there exist a point $x_0\in \Omega$ and a function $\phi$, with $\phi(x_0)=u(x_0)$, satisfying the hypotheses in (iii) of Definition \ref{defVisc} such that
\begin{equation}\label{hipCont}
\psi_r(x):=\begin{cases}
\phi(x)\leq u(x),\quad x\in B_{r}(x_0),\\
u(x),\quad x\in \mathbb{R}^{n}\setminus B_{r}(x_0),\\
\end{cases}
\end{equation}
and
\begin{equation}\label{contr}
(-\Delta)^s_p\psi_r(x_0)<f(x_0,\psi_r(x_0),D_s^p\psi_r(x_0))=f(x_0,u(x_0),D_s^p\psi_r(x_0)).
\end{equation}
Since $\psi_r\in C^2(B_r(x_0)){\cap L^{p-1}_{sp}(\R^n)}$, the operator $D_s^p\psi_r$ is continuous in $B_r(x_0)$ and thus, by the hypotheses on $f$ and $u$, the map
$$x\mapsto f(x,u(x),D_s^p\psi_r(x)),$$
is also continuous in $B_r(x_0)$. Thus, by the continuity of the operator $(-\Delta)^{s}_{p}$ and \eqref{contr}, there exist $\mu_0>0$ and $0<r_1<r$ such that
\begin{equation}\label{contrBall}
(-\Delta)^s_p\psi_r(x)\leq f(x,u(x),D_s^p\psi_r(x))-\mu_0,\qquad x\in B_{r_1}(x_0).
\end{equation}
Moreover, since $\psi_r\in L^{p-1}_{sp}(\mathbb{R}^{n})$, by \cite[Lemma 3.9]{KKL} we know that, for every $\varepsilon>0$ and $\rho>0$, there exist $0<\theta_1=\theta_1(\varepsilon,\rho)$, $0<r_2<\rho$ and $\eta\in C^{2}_{0}(B_{\frac{r_2}{2}}(x_0))$, $0\leq \eta\leq 1$ with $\eta(x_0)=1$ such that,
$$\sup_{B_{r_2}(x_0)}|(-\Delta)^s_{p}\psi_r-(-\Delta)^{s}_{p}(\psi_r+\theta\eta)|<\varepsilon,$$
for every $0<\theta<\theta_1$. 
Thus, taking $\varepsilon=\mu_0/2$, and $\rho=r_1$ using the Lipschitz hypothesis of $f$, by Lemma \ref{Lema1} and \eqref{contrBall}, it follows that
\begin{equation}\label{absurd}
\begin{split}
(-\Delta)^s_p(\psi_r+\theta\eta)(x)&<(-\Delta)^s_p\psi_r(x)+\frac{\mu_0}{2}\\
&\leq f(x,u(x),D_s^p\psi_r(x))-\frac{\mu_0}{2}\\
&\leq f(x,u(x),D_s^p(\psi_r+\theta\eta)(x))
\end{split}
\end{equation}
for $x\in B_{\frac{r_2}{2}}(x_0)$, $0<\theta<\min\{\theta_1(\mu_0/2, r_1),\widetilde{\theta}(\mu_0/(2K_f),r_2\}$ where $\widetilde{\theta}$ is given in Lemma \ref{Lema1} and $K_f$ is the Lipschitz constant of $f$. Thus, defining 
$$\tilde{f}(x,g):=f(x,u(x),g),$$
by testing with a smooth function compactly supported in $B_{\frac{r_2}{2}}(x_0)$ and integrating by parts, we get that \eqref{absurd} also holds in the weak sense. Thus, ${\psi}_r+\theta\eta$ is a weak subsolution of the problem
\begin{equation}\label{probAux}
(-\Delta)^s_p v(x)=\tilde{f}(x,D_s^p v(x)),\quad x\in B_{\frac{r_2}{2}}(x_0),
\end{equation}
Since, trivially, $u$ is a weak supersolution of \eqref{probAux} and 
$$ ({\psi}_r+\theta\eta)(x)={\psi}_r(x)\leq u(x),\quad x\in\mathbb{R}^{n}\setminus B_{\frac{r_2}{2}}(x_0),$$
by the (CPP) we have that $u\geq {\psi}_r+\theta\eta$ also in $B_{\frac{r_2}{2}}(x_0)$. This in particular implies that
$$u(x_0)\geq {\psi}_r(x_0)+\theta\eta(x_0)=\psi_r(x_0){+\theta>\psi_r(x_0)},$$
a contradiction with \eqref{hipCont}, what finishes the proof.

\subsection{Comparison principles}\label{sub_comp}
 A fundamental issue in Theorem \ref{WeakToVisc} is the availability of a comparison principle for weak solutions. Unfortunately not much is known yet in this line. In this subsection we provide a brief review of the {\it state-of-art} of the comparison results available for weak solutions of problems involving the fractional $p$-Laplacian.
 
The first one can be found in \cite[Lemma 9]{LL} and allows us to compare weak solutions when the right hand side of the problem is linear.
 \begin{theorem}
 Let $u$ and $v$ be two continuous functions belonging to $W_0^{s,p}(\R^n)$ and let $D\subset \R^n$ be a domain. If $v\geq u$ in $\R^n\setminus D$ and
 \begin{equation*}\begin{split}
 &\iint_{\R^{2n}}\frac{|v(x)-v(y)|^{p-2}(v(x)-v(y))(\phi(x)-\phi(y))}{|x-y|^{n+sp}}dx\, dy\\
 &\quad \geq \iint_{\R^{2n}}\frac{|u(x)-u(y)|^{p-2}(u(x)-u(y))(\phi(x)-\phi(y))}{|x-y|^{n+sp}}dx\, dy,
 \end{split}\end{equation*}
for all $\phi\in C_0(D)$, $\phi\gneq 0$, then $v\geq u$ in $D$.
 \end{theorem}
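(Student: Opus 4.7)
The plan is to test the integral inequality with $\phi := (u - v)_+$, which by hypothesis is supported in $\overline{D}$ (since $v \geq u$ in $\R^n \setminus D$), and then exploit the strict monotonicity of $t \mapsto |t|^{p-2}t$ (cf. Lemma \ref{lemmaL}). Writing $w := u-v$, $U := u(x)-u(y)$ and $V := v(x)-v(y)$, the identity $U - V = w(x) - w(y)$ together with the classical monotonicity $(|U|^{p-2}U - |V|^{p-2}V)(U - V) \geq 0$ will drive the argument.

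First I would justify the admissibility of $\phi = w_+$ as a test function. By continuity of $u, v$, the function $w_+$ is continuous; since $u, v \in W_0^{s,p}(\R^n)$ and $v \geq u$ outside $D$, $w_+$ belongs to $W_0^{s,p}(\R^n)$ with $\mathrm{supp}(w_+) \subset \overline{D}$. A density argument, approximating $w_+$ by a sequence in $C_0^\infty(D)$ and using the continuity of the bilinear form associated with $(-\Delta)_p^s$, then permits us to substitute $\phi = w_+$ in the hypothesis. After subtracting the two sides, this yields
\begin{equation*}
\iint_{\R^{2n}} \frac{\bigl[|V|^{p-2}V - |U|^{p-2}U\bigr]\bigl[w_+(x)-w_+(y)\bigr]}{|x-y|^{n+sp}}\, dx\, dy \geq 0.
\end{equation*}

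Next I would show that the integrand here is pointwise $\leq 0$, via a short case analysis on the signs of $w(x), w(y)$. If $w(x), w(y)\geq 0$ then $w_+(x) - w_+(y) = U - V$, and the claim is the flipped monotonicity. If both are negative, $w_+$ vanishes identically on the pair. In the mixed cases, $w_+(x) - w_+(y)$ is a truncation of $w(x) - w(y) = U - V$ with the same sign, so the product inherits the correct sign as well. Combined with the displayed inequality, this forces the integrand to vanish almost everywhere; the strict monotonicity of $t \mapsto |t|^{p-2}t$ then leaves no option except $w_+(x) = w_+(y)$ for a.e.\ pair, and together with $w_+ \equiv 0$ on $\R^n\setminus D$ this gives $w_+ \equiv 0$, i.e.\ $v \geq u$ in $D$. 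The main obstacle I anticipate is the admissibility step: the hypothesis allows only $\phi \in C_0(D)$, so one must carefully approximate $w_+$ by smooth compactly supported functions and control the nonlocal tail of the double integral when passing to the limit; everything after that is purely algebraic.
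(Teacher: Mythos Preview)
The paper does not actually prove this statement: it is quoted verbatim from \cite[Lemma 9]{LL} (Lindgren--Lindqvist) as part of the brief survey in Subsection~\ref{sub_comp}, with no argument supplied. Your proposal---testing with $(u-v)_+$ and using the monotonicity of $t\mapsto|t|^{p-2}t$ in a pointwise case analysis---is correct and is precisely the standard proof of such comparison principles (and indeed the one in the cited source), so there is nothing in the paper to compare against; your identification of the density/approximation step as the only delicate point is also accurate.
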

See also \cite[Proposition 2.10]{IMS} and \cite[Lemma 6]{KKP2} for similar results with weaker assumptions on $u$ and $v$. In the case of a nonlinear right hand side the following weak comparison principle is proved in \cite[Proposition 2.5]{dPQ} (see \cite[Theorem 1.1]{J} for the strong counterpart).
\begin{theorem}
Let $\Omega$ be a bounded domain and $c\in L^1_{loc}(\Omega)$. Suppose $u,v\in {W}^{s,p}(\Omega)$ are non negative super and subsolutions respectively of the problem
\begin{equation}\label{nonLinear}
(-\Delta)^s_pu=c(x)|u|^{p-2}u\quad \mbox{ in }\Omega.
\end{equation}
If $c(x)\leq 0$ in $\Omega$ and $u\geq v$ in $\R^n\setminus \Omega$ then $u\geq v$ a.e. in $\Omega$.
\end{theorem}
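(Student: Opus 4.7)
The natural approach is the classical test-function argument for weak comparison, using $\varphi := (v-u)_+$ as a test function in both weak inequalities. This is admissible because the hypothesis $u \geq v$ on $\R^n \setminus \Omega$ forces $\varphi$ to be supported in $\overline\Omega$, so $\varphi \in W_0^{s,p}(\Omega)$ after a standard density approximation. I would subtract the weak inequality for the subsolution $v$ from that for the supersolution $u$, obtaining
\begin{equation*}
\iint_{\R^{2n}} \frac{\bigl[A(v(x)-v(y))-A(u(x)-u(y))\bigr]\bigl(\varphi(x)-\varphi(y)\bigr)}{|x-y|^{n+sp}}\,dx\,dy \;\leq\; \int_\Omega c(x)\bigl[A(v(x))-A(u(x))\bigr]\varphi(x)\,dx,
\end{equation*}
where $A(t):=|t|^{p-2}t$.

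Next I would show that the RHS is nonpositive and the LHS is nonnegative. The RHS is easy: on the set $\{v>u\}$ where $\varphi>0$, we have $v(x)>u(x)\geq 0$ and so $A(v(x))\geq A(u(x))\geq 0$; since $c\leq 0$, the integrand is $\leq 0$. For the LHS, I would perform a four-case analysis according to the signs of $v(x)-u(x)$ and $v(y)-u(y)$. If both are $\leq 0$ the integrand vanishes; if both are $>0$, writing $\alpha=v(x)-v(y)$, $\beta=u(x)-u(y)$, one has $\varphi(x)-\varphi(y)=\alpha-\beta$, and monotonicity of $A$ on $\R$ yields $(A(\alpha)-A(\beta))(\alpha-\beta)\geq 0$; in the mixed case, say $v(x)>u(x)$ and $v(y)\leq u(y)$, the identity $\alpha-\beta=(v(x)-u(x))+(u(y)-v(y))>0$ gives $A(\alpha)>A(\beta)$, while $\varphi(x)-\varphi(y)=v(x)-u(x)>0$.

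Combining, both sides equal zero and the LHS integrand vanishes a.e.\ on $\R^{2n}$. To conclude, I would argue by contradiction: if $\{v>u\}\cap\Omega$ had positive measure, then picking $x$ in this set and $y$ in its complement (chosen so that neither $v(y)>u(y)$ nor the degenerate equality $v(x)-v(y)=u(x)-u(y)$ holds), the mixed-case analysis above gives a strictly positive integrand, contradicting the a.e.\ vanishing. Hence $v\leq u$ a.e.\ in $\Omega$.

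\textbf{Main obstacle.} The subtle point is not the algebraic monotonicity—which is by now standard for the fractional $p$-Laplacian—but the admissibility and integrability of the chosen test function under the weak hypothesis $c\in L^1_{\mathrm{loc}}(\Omega)$. In particular, the RHS integral $\int_\Omega c(x)A(v)\varphi\,dx$ requires some control of $A(v)$ and $\varphi$ (for example $u,v\in L^\infty_{\mathrm{loc}}$ via De Giorgi-type regularity, or otherwise a truncation of $\varphi$ by $\min\{(v-u)_+,k\}$ followed by passing $k\to\infty$ using monotone convergence). Once the truncation argument is set up, the rest of the scheme goes through verbatim.
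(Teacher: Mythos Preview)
The paper does not prove this theorem: it appears in the survey Subsection~4.2 and is quoted verbatim from \cite[Proposition~2.5]{dPQ} (Del Pezzo--Quaas), so there is no proof in the paper to compare against. Your argument is the standard one and is essentially what the cited reference does: test both weak inequalities with $(v-u)_+$, exploit the pointwise monotonicity $(A(\alpha)-A(\beta))(\alpha-\beta)\ge 0$ via the four-case split to see that the left-hand double integral is nonnegative, use $c\le 0$ and the nonnegativity of $u,v$ to see the right-hand side is nonpositive, and conclude from the strict positivity on the ``mixed'' set $\{v>u\}\times\{v\le u\}$ (which has positive product measure since $\Omega$ is bounded). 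Your flagged obstacle---admissibility of $(v-u)_+$ and integrability of $c\,A(v)\varphi$ under only $c\in L^1_{\mathrm{loc}}$---is exactly the technical point; the truncation $\min\{(v-u)_+,k\}$ you propose is the standard remedy and suffices.
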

Notice that, under further regularity assumptions on $c$ and the solution $u$, both Theorem \ref{ViscToWeak} and Theorem \ref{WeakToVisc} apply, establishing the equivalence of weak and viscosity solutions for the problem \eqref{nonLinear}. See also \cite[Theorem 7]{B} for a comparison principle for minimizers.

As far as we know, no comparison results are available yet for nonlinearities involving any kind of gradient term. In the classical case, when $s=1$, some can be found in \cite{TPR, PS}.

\section*{Conflict of interest}
On behalf of all authors, the corresponding author states that there is no conflict of interest.

\end{document}